\newtheorem{theorem}{Theorem}[section]
\newtheorem{lemma}[theorem]{Lemma}
\newtheorem{remark}[theorem]{Remark}
\newcommand{\sect}[1]{\section{#1} \setcounter{equation}{0} }
\newcounter{ca}
 \newcommand{\ec}{\end{comment}}
\def\be  {\begin{equation}}
\def\ee  {\end{equation}}
\def\ba  {\begin{eqnarray}}
\def\ea  {\end{eqnarray}}
\def\baa {\begin{eqnarray*}}
\def\eaa {\end{eqnarray*}}
\newenvironment{comment}[2]
{\bgroup\vspace{7pt}
\begin{tabular}{|p{5in}|}
\hline \qquad \bf \footnotesize Comment -- to be deleted in the final version \\
\hline
\quad\sl\footnotesize #1#2} {\\ \hline \end{tabular}
\vspace{7pt}\indent\egroup}
\def\updots{\mathinner{\mkern
1mu\raise 1pt \hbox{.}\mkern 2mu \mkern 2mu \raise
4pt\hbox{.}\mkern 1mu \raise 7pt\vbox {\kern 7 pt\hbox{.}}} }
\def \const{\mathop{\rm const}\nolimits}
\title{{\sc Shape preserving approximation of periodic functions -- Conclusion}\thanks{{\it AMS classification:} 41A29, 42A10, 41A25, {\it Keywords
and phrases:} Comonotone approximation by trigonometric polynomials, Degree of approximation, Jackson-type estimates.}}
\author{D. Leviatan\thanks{Raymond and Beverly Sackler School of Mathematical Sciences, Tel Aviv University, Tel Aviv 6139001, Israel
({\tt  leviatan@tauex.tau.ac.il}).}
\and
I. O. Shevchuk\thanks
{Faculty of Mechanics and Mathematics, Taras
Shevchenko National University of Kyiv, 01601 Kyiv, Ukraine ({\tt shevchukio@knu.ua}).}
}
\begin{document}

\maketitle

\begin{abstract}
We give here the final results about the validity of Jackson-type estimates in comonotone approximation of $2\pi$-periodic functions by trigonometric polynomials. For coconvex and the so called co-$q$-monotone, $q>2$, approximations, everything is known by now. Thus, this paper concludes the research on Jackson type estimates of Shape Preserving Approximation of periodic functions by trigonometric polynomials. It is interesting to point out that the results for comonotone approximation of a periodic function are substantially different than the analogous results for comonotone approximation, by algebraic polynomials, of a continuous function on a finite interval.

\end{abstract}

\sect{Introduction}
We give here the final results about the validity of Jackson-type estimates in comonotone approximation of $2\pi$-periodic functions by trigonometric polynomials. For coconvex and the so called co-$q$-monotone, $q>2$, approximations, everything is known by now (see, e.g, \cite{LS}). Thus, this paper concludes the research on Jackson type estimates of Shape Preserving Approximation of periodic functions by trigonometric polynomials.

Let $C=C^0$ be the space of $2\pi$-periodic continuous functions $f$ on $\mathbb R$, endowed with the uniform norm
$$
\|f\|:=\|f\|_{\mathbb R}=\max_{x\in\mathbb R}|f(x)|.
$$
For $r\in\mathbb N$, denote by $C^r$ the subspace of $r$ times continuously differentiable functions $f\in C$.

In this paper we discuss the approximation of $f\in C^r$, $r\ge0$, that changes its monotonicity a few times in its period $(-\pi,\pi]$, by trigonometric polynomials that follow the monotonicity of $f$ (i.e., comonotone approximation). Note that a nonconstant trigonometric polynomial may only have a finite even number of changes of monotonicity in a period. Hence, one is limited to discussion of comonotone approximation of $f$ that has  $2s$, $s\ge1$, changes of monotonicity.

Let $Y_s:=\{y_i\}_{i\in\mathbb Z}$, $s\in\mathbb N$, $y_i\in\mathbb R$, be such that $y_i<y_{i+1}$ and $y_{i+2s}=y_i+2\pi$, $i\in\mathbb Z$. Denote by $\Delta^{(1)}(Y_s)$,  the collection of functions $f\in C$, such that $(-1)^{i-1}f$ is nondecreasing on the intervals, $[y_{i-1},y_i]$, and note that
if we denote
\begin{equation}\label{Pi}
\Pi(x):=\prod_{i=1}^{2s}\sin\frac{x-y_i}2,
\end{equation}
then for $f\in C^1$, $f\in\Delta^{(1)}(Y_s)$, if and only if
$$
f'(x)\Pi(x)\ge0,\quad x\in\mathbb R.
$$
For $f\in\Delta^{(1)}(Y_s)$, let
$$
E_n^{(1)}(f,Y_s):=\inf_{T_n\in\Delta^{(1)}(Y_s)}\|f-T_n\|,
$$
denote the error of best approximation of $f$ by comonotone trigonometric polynomials $T_n$ of degree $<n$.

We are interested in estimates of the Jackson type,
\begin{equation}\label{m}
E_n^{(1)}(f,Y_s)\le\frac{c(k,r,s)}{n^r}\omega_k\left(f^{(r)},\frac1n\right),\quad n\ge 1,
\end{equation}
where $\omega_k\left(f^{(r)},\cdot\right)$ is the $k$th modulus of smoothness of $f^{(r)}$.
Specifically, we are interested in the validity of (\ref{m}), depending on the various parameters, $s\ge1$, $k\ge1$ and $r\ge0$.

The first result in this direction was obtained by Lorentz and Zeller \cite{LZ}*{Theorem 1}, who proved such estimate (with $r=0$ and $k=1$) for the approximation of a continuous bell-shaped function on $[-\pi,\pi]$, by
bell-shaped trigonometric polynomials. Clearly, such a bell-shaped function may be looked upon as an even $2\pi$-periodic function, decreasing on $[-\pi,0]$ and increasing on $[0,\pi]$. (They applied this result to get a corresponding estimate for the approximation of a continuous monotone function on $[-1,1]$, by monotone algebraic polynomials.)

If $f\in C^r\cap\Delta^{(1)}(Y_s)$, and either $k=1$ or $r>2s$, then the validity of (\ref{m}) is proved in \cite{LSS}*{Theorem 1.1}.
(For $r=0$ and $k=1$, this was shown earlier by \cite{Pl}.)

In this paper we determine all triplets $(r,k,s)$, for which (\ref{m}) is valid, and prove that (\ref{m}) is invalid for all other triplets.

\begin{remark} If $(\ref{m})$ is invalid and $r\ge2$, then all is not lost, as it still follows by \cite{Dz09}, that the estimate
\begin{equation}\label{lm}
E_n^{(1)}(f,Y_s)\le \frac {c(k,r,s)}{n^r}\omega_k\left(f^{(r)},\frac1n\right),\quad n\ge N(r,k,Y_s),
\end{equation}
holds, where $N(r,k,Y_s)$ depends on the smallest distance between the points $y_i$, $i\in\mathbb Z$. In addition, for $n>N(Y_s)$ we may guarantee that the distance between any two consecutive $y_i$'s is bigger than $\frac{5\pi}n$. Therefore, if we take $n>N(Y_s)$, then we may combine Lemma $\ref{4.6}$ and Lemma $\ref{mon}$ with Lemma $\ref{lss}$ below, and get $(\ref{lm})$ for $r=1$ and $k=2,3$, as well. Hence, $(\ref{lm})$ is valid, but as we show in this paper, $(\ref{m})$ is valid for these pairs only for $s=1$, and is invalid otherwise. Finally, the validity of $(\ref{lm})$ when $r=0$ and $k=2$, is proved in \cite{DP}. However, if $r=0$ and $k\ge3$, and if $r=1$ and $k\ge4$, then even $(\ref{lm})$ is invalid, since (see \cite{DVY}), for each $Y_s$ there is a function $f\in C^r\cap\Delta^{(1)}(Y_s)$, such that for these pairs $(r,k)$,
$$
\limsup_{n\to\infty}\frac{n^r}{\omega_k(f^{(r)},1/n)}E_n^{(1)}(f,Y_s)=\infty.
$$
\end{remark}
Recall that for $g\in C$ or $g\in C[a,b]$, we denote the $k$th modulus of smoothness of $g$, $k\ge1$, respectively, by
\[
\omega_k(g,t)=\sup_{0<h\le t}\left\|\Delta_h^k(g,\cdot)\right\|\quad\text{and}\quad\omega_k(g,t;[a,b])=\sup_{0<h\le t}\left\|\Delta_h^k(g,\cdot)\right\|_{[a,b-kh]},
\]
where $\Delta_h^k(g,\cdot)=\sum_{i=0}^k(-1)^i\binom kig\bigl(\cdot+ih\bigr)$ and $\|g\|_{[c,d]}=\max_{x\in[c,d]}|g(x)|$.
\sect{Main results}
Our first positive result is
\begin{theorem}\label{T1} Let $r=2s-2$. If $f\in C^r\cap\Delta^{(1)}(Y_s)$, then $(\ref{m})$ with $k=2$ holds.
\end{theorem}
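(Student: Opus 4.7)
The plan is to build the comonotone polynomial $T_n$ in two stages. First, produce a high-quality Jackson-type approximation $P_n$ of $f$ of degree less than $n$, without regard to the shape constraint; second, add a correction $R_n$ of the same degree whose derivative enforces the sign condition $T_n'\Pi\ge0$ everywhere.

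For the first stage I would take $P_n$ via a Stechkin-type or de la Vall\'ee Poussin construction, so that
\[
\|f-P_n\|\le cn^{-r}\omega_2(f^{(r)},1/n)
\andd
\|f^{(j)}-P_n^{(j)}\|\le cn^{j-r}\omega_2(f^{(r)},1/n),\quad 0\le j\le r,
\]
together with pointwise refinements of these bounds in $1/n$-neighborhoods of the points $y_i$. These are standard. However $P_n'\Pi$ may be negative on small neighborhoods of the $y_i$'s, and the rest of the proof is devoted to repairing this.

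The correction is engineered so that $R_n'(x)=\Pi(x)K_n(x)$, where $K_n$ is a nonnegative trigonometric polynomial of degree roughly $n-s$, localized near the $y_i$'s and built from Jackson--Fej\'er type kernels (multiplied if necessary by factors $\sin^{2}((x-y_i)/2)$). Its amplitude near each $y_i$ is tuned to dominate $|P_n'(x)|/|\Pi(x)|$ where the latter has the wrong sign. The critical feature is the balance $r+k=2s$: the $2s$ simple zeros of $\Pi$ are exactly matched by the smoothness available, so pointwise Dzyadyk-type estimates on $f^{(r)}-P_n^{(r)}$ near $y_i$ (combined with the vanishing of $f'$ at $y_i$ forced by $f'\Pi\ge0$ when $r\ge1$) give the requisite control of $P_n'/\Pi$ without inflating $\|R_n\|$ beyond the target rate. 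The subcase $s=1$, $r=0$ is slightly different because $f$ need not be differentiable; there one argues as in the proof of the Lorentz--Zeller theorem for bell-shaped functions, upgraded from $\omega_1$ to $\omega_2$.

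The main obstacle is precisely this balance: $R_n$ must be \emph{pointwise} large enough on each bad neighborhood to cancel the wrong-signed part of $P_n'\Pi$, while remaining \emph{uniformly} small enough that $\|R_n\|\le cn^{-r}\omega_2(f^{(r)},1/n)$. Achieving this requires sharp pointwise (not merely uniform) Jackson estimates on $P_n^{(j)}-f^{(j)}$ near each $y_i$, and a delicate choice of the amplitude, degree, and support of $K_n$; this is the step where the sharpness of the identity $r+k=2s$ is used in an essential way, and where companion counterexamples in the paper show the construction must break down for smaller $r+k$. Once that is done, $T_n:=P_n+R_n$ satisfies $T_n'\Pi\ge0$ on $\mathbb R$ together with the required error bound.
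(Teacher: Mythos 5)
Your overall strategy (an unconstrained Jackson polynomial $P_n$ plus a correction $R_n$ with $R_n'=\Pi K_n$ for a nonnegative localized kernel $K_n$) is genuinely different from the paper's. The paper never corrects a global polynomial: it builds a comonotone continuous piecewise polynomial $S$ --- monotone pieces interpolating $f$ at the endpoints of the intervals $I_j$ away from the extrema (Lemma~\ref{mon}), and local comonotone polynomial pieces on the $O(1/n)$-neighbourhoods $O_\mu$ of \emph{clusters} of extremal points (Lemmas~\ref{4.5}--\ref{4.7}) --- and then converts $S$ into a comonotone trigonometric polynomial via \cite{LSS}*{Corollary 4.4} (Lemma~\ref{lss}).

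There is, however, a genuine gap in your proposal, and it sits exactly at the heart of the theorem. You treat each $y_i$ as an isolated simple zero of $\Pi$ near which pointwise Dzyadyk-type estimates plus $f'(y_i)=0$ control $P_n'/\Pi$. But the constant in (\ref{m}) must be independent of $Y_s$, in particular of the mutual distances of the $y_i$; the hard case is when several (up to $2s$) of the points $y_i$ coalesce within a window of length $\ll 1/n$. There $|\Pi(x)|$ is uniformly tiny on an entire $1/n$-interval, so a correction of the form $\Pi K_n$ cannot dominate the wrong-signed part of $P_n'$ (whose size, of order $n\cdot n^{-r}\omega_2(f^{(r)},1/n)$ pointwise, is \emph{not} small compared with $|\Pi|$ there) without $\|R_n\|$ blowing up; no tuning of the amplitude of $K_n$ repairs this, because the obstruction is pointwise. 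What saves the theorem --- and what your argument never uses --- is that the shape constraint together with $r=2s-2$ forces $f$ itself to be nearly constant on such a cluster: this is Lemma~\ref{4.4} (and Lemma~\ref{4.5}), proved by a divided-difference argument over all the knots in the cluster, yielding $\|f-f(y_1)\|\le ch^{2s-2}\omega_2(f^{(2s-2)},h)$ when the cluster has diameter $\le ch$. That is precisely where the balance $r+k=2s$ enters, and the companion counterexamples (all built from coalescing $y_i$'s at scale $n^{-3/2}$ or finer) show it is sharp. Without an ingredient of this type your construction fails on exactly the configurations of $Y_s$ that make the theorem nontrivial. A secondary, fixable point: $R_n'=\Pi K_n$ must also have zero mean over the period for $R_n$ to be a trigonometric polynomial.
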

However, Theorem \ref{T1} cannot be extended to higher order moduli. Namely, we prove
\begin{theorem}\label{L53} Let $r=2s-2$. There is a constant $c(r)>0$, such that for each $n\ge1$, there exist a collection $Y_s$ and a function $F\in C^r\cap\Delta^{(1)}{(Y_s)}$, such that
\begin{equation}\label{l510}
E_n^{(1)}(F,Y_s)>\sqrt n\frac{c(r)}{n^r}\omega_3\left(F^{(r)},\frac1n\right).
\end{equation}
Hence, for every $A>0$ and $N>0$, there exist $n>N$, $Y_s$ and $F\in C^r\cap\Delta^{(1)}{(Y_s)}$, such that
\[
E_n^{(1)}(F,Y_s)>\frac A{n^r}\omega_3\left(F^{(r)},\frac1n\right).
\]
\end{theorem}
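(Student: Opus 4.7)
The plan: I will exhibit, for each $n\ge 1$, a pair $(Y_s,F)$ violating the Jackson estimate $(\ref{m})$ by the factor $\sqrt n$; the second assertion then follows by letting $n\to\infty$. The cornerstone is the factorization of comonotone trigonometric polynomials: every $T_n\in\Delta^{(1)}(Y_s)$ with $\deg T_n<n$ can be written as $T_n'(x)=\Pi(x)\cdot S(x)$ where $\Pi$ is as in $(\ref{Pi})$ and $S$ is a nonnegative trigonometric polynomial of degree $\le n-1-s$. Indeed, $T_n'\Pi\ge 0$ forces $T_n'$ to vanish to odd order at each $y_i$, and since $\Pi$ has only simple zeros, $T_n'/\Pi$ is a nonnegative trigonometric polynomial of the indicated degree. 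The counterexample is built by mirroring this representation on the $F$ side, taking $F'=\Pi\phi$ for an appropriate smooth nonnegative $\phi$; this automatically places $F$ in $\Delta^{(1)}(Y_s)$, because $F'\Pi=\Pi^2\phi\ge 0$.

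Concretely, I would cluster the sign-change points on the scale $h=\tau/n$ for a small $\tau>0$, setting $y_i-y_{i-1}=h$ for $i=2,\dots,2s$ and $y_{2s+1}=y_1+2\pi$, so that all $2s-1$ small gaps concentrate in a single cluster $I:=[y_1,y_{2s}]$ of length $(2s-1)h\asymp 1/n$. For $\phi$, I would choose a smooth nonnegative function with $\|\phi\|_{C^{r+2}}$ bounded independently of $n$, adjusted (for instance by combining two disjointly supported nonnegative pieces with $\Pi$ of opposite signs on their supports) so that $\int_0^{2\pi}\Pi\phi\,dt=0$; this makes the antiderivative $F$ both $2\pi$-periodic and $C^\infty$. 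The upper bound $\omega_3(F^{(r)},1/n)\le c(r,s)/n^3$ then follows at once from $\omega_3(g,t)\le c\,t^3\|g'''\|_\infty$ together with the uniform estimate $\|F^{(r+3)}\|_\infty=\|(\Pi\phi)^{(r+2)}\|_\infty\le c(s,r)$, obtained via Leibniz and the fact that $\|\Pi^{(j)}\|_\infty$ depends only on $s$ and $j$.

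The core of the proof is the lower bound $E_n^{(1)}(F,Y_s)\ge c\sqrt n/n^{r+3}$. Given $T_n\in\Delta^{(1)}(Y_s)$ with $T_n'=\Pi S$, one has $(F-T_n)'=\Pi(\phi-S)$, and for any smooth $2\pi$-periodic test function $\psi$, integration by parts yields
\[
\Bigl|\int_0^{2\pi}\Pi(\phi-S)\,\psi\,dt\Bigr|=\Bigl|\int_0^{2\pi}(F-T_n)\,\psi'\,dt\Bigr|\le 2\pi\,\|F-T_n\|\cdot\|\psi'\|_\infty.
\]
It therefore suffices to construct $\psi$ with $\|\psi'\|_\infty=O(1)$ for which the left-hand side is at least $c\sqrt n/n^{r+3}$ uniformly in nonnegative trigonometric polynomials $S$ of degree $\le n-1-s$. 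The natural strategy is to shape $\psi$ so that $\Pi\psi$ has a definite sign on $I$, forcing $\int_0^{2\pi}\Pi S\psi\,dt\ge 0$ by the positivity of $S$, while $\int_0^{2\pi}\Pi\phi\psi\,dt$ is bounded below by $c\sqrt n/n^{r+3}$. Using the Fejér--Riesz representation $S=|P|^2$ with $\deg P\le(n-1-s)/2$, this reduces to a weighted Chebyshev extremal problem on the cluster $I$.

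The principal obstacle is precisely this extremal step: producing an explicit $\psi$ that realizes the $\sqrt n$ constant. Heuristically, the cluster has length $\asymp 1/n$, so after rescaling $S$ restricted to $I$ resembles a nonnegative algebraic polynomial of effective degree $\asymp 1$; yet the global positivity of $S$, viewed on all of $\mathbb R$, imposes Markov-type rigidity of degree $\asymp n$, and the gap between constrained and unconstrained approximation in this regime is of order $\sqrt n$. Making this quantitative, through a sharp weighted Chebyshev construction of $\psi$ with weight $\Pi$, is the only substantive technical point; the factorization of comonotone polynomials, the construction of $F$, the modulus estimate and the integration-by-parts reduction are all routine, and together with the extremal lower bound they immediately yield $(\ref{l510})$.
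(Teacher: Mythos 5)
There is a genuine gap: the entire content of the theorem is the lower bound $E_n^{(1)}(F,Y_s)\gtrsim \sqrt n\,n^{-r-3}$, and you explicitly leave that step (the ``weighted Chebyshev extremal problem'') unproved, offering only a heuristic about Markov-type rigidity of nonnegative trigonometric polynomials. A heuristic for the exponent $1/2$ is not a proof, so the proposal does not establish \eqref{l510}. Moreover, the duality scheme as stated has its signs backwards: if $\Pi\psi$ is arranged so that $\int_0^{2\pi}\Pi S\psi\,dt\ge0$ for all admissible nonnegative $S$, then $\int\Pi(\phi-S)\psi=\int\Pi\phi\psi-\int\Pi S\psi$ is only bounded \emph{above} by $\int\Pi\phi\psi$, and since $\int\Pi S\psi$ can be large and positive this gives no lower bound on $\bigl|\int\Pi(\phi-S)\psi\bigr|$; you would need $\int\Pi S\psi\le0$ on the whole cone, a separation statement you neither formulate precisely nor prove. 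A further concern is your choice of cluster scale $h\asymp 1/n$: the paper clusters the interior sign changes at scale $b=n^{-3/2}$, and that exponent is exactly what produces $\sqrt n$ (the ratio one can extract behaves like $b/(n^{-2}+nb^2)$, which is maximized at $b=n^{-3/2}$ and is only $O(1)$ at $b\asymp 1/n$), so even the skeleton of your construction is tuned to the wrong scale.

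For comparison, the paper's argument is far more elementary and avoids the Fej\'er--Riesz/duality machinery entirely. It takes $\tau(x)=\sin^{r+1}x$, cuts it off near $0$ at scale $b$ to get $f=G(\cdot/b)\tau$, puts $y_1=-\pi$ and $y_2,\dots,y_{2s}\in[b/2,b]$, and sets $F=\int_0^x f$. Since $T_n'$ vanishes at the $r+1$ clustered points, Rolle's theorem gives $\theta\in[b/2,b]$ with $T_n^{(r+1)}(\theta)=0$, while $|\tau^{(r)}(\theta)|\ge b/4$; Bernstein's inequality then converts this derivative discrepancy into $\|F-T_n\|\ge cb\,n^{-r-1}$, and the choice $b=n^{-3/2}$ balances this against $\omega_3(F^{(r)},1/n)\le c(n^{-3}+b^2)$ to yield the $\sqrt n$ divergence. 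If you want to salvage your approach you must actually solve the extremal problem with the correct sign convention and re-optimize the cluster scale; as written, the proof is incomplete at its only essential point.
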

We also prove
\begin{theorem}\label{T2} Let $r=2s-1$. If $f\in C^r\cap\Delta^{(1)}(Y_s)$, then
\begin{equation}\label{150}
E_n^{(1)}(f,Y_s)\le\frac{c(r)}{n^r}\omega_3\left(f^{(r)},\frac1n\right),\quad n\ge1.
\end{equation}
\end{theorem}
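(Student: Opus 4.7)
The plan is to upgrade the estimate $(\ref{lm})$---which by the Remark holds for $n\ge N(r,Y_s)$ and $k=3$---to the bound $(\ref{150})$ valid for every $n\ge 1$, with a constant depending only on $r=2s-1$. Note that $r=2s-1$ is critical: Theorem~\ref{L53} shows that one derivative less already destroys $(\ref{m})$ for $k=3$, so $r=2s-1$ is the smallest value of $r$ at which the bound we are after can be expected to hold.

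The large-$n$ case is immediate from the Remark. When $s\ge 2$ one has $r=2s-1\ge 3$ and \cite{Dz09} gives $(\ref{lm})$ with $k=3$; when $s=1$ (so $r=1$), the combination of Lemmas~\ref{4.6}, \ref{mon}, and~\ref{lss} described in the Remark yields the same estimate on the tail $n\ge N(Y_s)$. So one is left with proving $(\ref{150})$ uniformly in $Y_s$ on the finite range $1\le n<N(r,Y_s)$.

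For this finite range, I would construct $T_n$ by subtracting from $f$ a comonotone ``model'' $F_0\in\Delta^{(1)}(Y_s)$ built directly from $\Pi$---for instance a $2\pi$-periodic primitive of a nonnegative multiple $c\,\Pi\cdot Q$, where $Q$ is a nonnegative trigonometric polynomial of fixed degree chosen so that $\int_{-\pi}^{\pi}\Pi\,Q=0$ (forcing $F_0$ to be periodic), and $c$ is small enough that $h:=f-F_0$ still lies in $\Delta^{(1)}(Y_s)$. A low-degree comonotone approximant to $F_0$ is then furnished by Lemma~\ref{lss}, and adding the two pieces yields $T_n$. The analytic estimate that has to be verified along the way is $\|h\|\le c(r)\,n^{-r}\omega_3(f^{(r)},1/n)$ throughout the finite range of $n$.

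The main obstacle is obtaining this last estimate uniformly in $Y_s$: when the nodes $y_i$ cluster, the natural choices of $F_0$ and $c$ degenerate, and one has to recover the loss elsewhere. The identity $r-1=2s-2$ is what closes the argument. Differentiating the pointwise representation $f'=g\,\Pi$ (with $g\ge 0$) exactly $2s-2$ times produces, via Leibniz, a term proportional to $g\cdot\Pi^{(2s-2)}$ whose leading coefficient is a nonzero constant independent of $Y_s$; this converts ``smallness of $\Pi$ on a cluster'' into pointwise control of $g$ by $f^{(r)}$. Fed through the third modulus of smoothness of $f^{(r)}$, it yields the required bound on $\|h\|$ with a constant depending only on $r$, completing the construction.
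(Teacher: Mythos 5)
There is a genuine gap: the reduction is right, but the construction proposed for the hard (uniform in $Y_s$) part does not work. First, to have $h=f-F_0\in\Delta^{(1)}(Y_s)$ with $F_0'=c\,\Pi Q$, $Q\ge0$, you need $h'\Pi=f'\Pi-c\,Q\,\Pi^2\ge0$, i.e.\ $f'\Pi\ge c\,Q\,\Pi^2$ pointwise; this forces $c=0$ whenever $f'$ vanishes off the zero set of $\Pi$, or vanishes at some $y_i$ to higher order than $\Pi$ does --- which is the generic situation. Second, even granting a nonzero $c$, the target estimate $\|h\|\le c(r)n^{-r}\omega_3(f^{(r)},1/n)$ is false: $F_0$ is a single trigonometric polynomial of fixed degree, while $f$ is arbitrary away from the cluster of nodes, so $\|f-F_0\|$ cannot be forced below $n^{-r}\omega_3(f^{(r)},1/n)$ throughout the range $n<N(r,Y_s)$ (which is unbounded as the $y_i$ cluster). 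Third, Lemma~\ref{lss} does not furnish ``a low-degree comonotone approximant to $F_0$'': it takes a comonotone \emph{piecewise polynomial} $S$ on the mesh $\{j\pi/n\}$ that is close to $f$ and converts it into a comonotone trigonometric polynomial; it is not an approximation device for an auxiliary model function. Finally, the Leibniz step is unsound: $f\in C^{2s-1}\cap\Delta^{(1)}(Y_s)$ only gives $f'\Pi\ge0$, not a representation $f'=g\Pi$ with $g$ having $2s-2$ controlled derivatives, and $\Pi^{(2s-2)}$ is a trigonometric polynomial of degree $s$, not a nonzero constant independent of $Y_s$.

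The paper's actual proof is local rather than global. After disposing of $n\le c_2$ by Whitney's inequality (estimate \ineq{51}), it partitions $[-\pi,\pi]$ by $x_j=j\pi/n$ and isolates the components $O_\mu$ (of length $\le 6sh$) containing the $y_i$. If all $2s$ nodes fall in one component, Lemma~\ref{4.1} with $k=2$ shows the constant $T_n\equiv f(0)$ already achieves $ch^{2s-1}\omega_3(f^{(2s-1)},h)$ --- this is precisely the uniform-in-$Y_s$ clustering estimate your sketch is missing. Otherwise each component contains $\nu\le 2s-1=r$ nodes: Lemma~\ref{mon} gives comonotone polynomial pieces on the intervals $I_j\subset I\setminus O$ with $\omega_k$ for any $k$, Lemma~\ref{4.6} handles a component with $\nu=r$ nodes with $\omega_3$ (this is where the third modulus enters and cannot be improved, by Theorem~\ref{L52}), and Lemma~\ref{4.7} handles $\nu\le r-1$ with any $\omega_k$. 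Gluing these into $S\in\Sigma_{n,r+3}(Y_s)$ with $S'\Pi\ge0$ and $\|f-S\|_I\le cn^{-r}\omega_3(f^{(r)},1/n)$, Lemma~\ref{lss} then produces the comonotone trigonometric polynomial. If you want to salvage your outline, the piece to import is this case split together with Lemmas~\ref{4.1}, \ref{4.6} and \ref{4.7}; the global model function $F_0$ should be discarded.
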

\begin{remark} Theorem $\ref{T2}$ clearly implies $(\ref{150})$ with $\omega_2$ and $\omega_1$ replacing $\omega_3$. The case of $\omega_1$ was proved in \cite{LSS}*{Theorem 1.1}, while that of $\omega_2$ is new.
\end{remark}
Again, Theorem \ref{T2} cannot be extended to higher order moduli. Namely, we prove
\begin{theorem}\label{L52} Let $r=2s-1$. There is a constant $c(r)>0$, such that for each $n\ge1$, there exist a collection $Y_s$ and a function $F\in C^r\cap\Delta^{(1)}{(Y_s)}$, such that
\begin{equation}\label{l51}
E_n^{(1)}(F,Y_s)>\root3\of n\frac{c(r)}{n^r}\omega_4\left(F^{(r)},\frac1n\right).
\end{equation}
Hence, for every $A>0$ and $N>0$, there exist $n>N$, $Y_s$ and $F\in C^r\cap\Delta^{(1)}{(Y_s)}$, such that
\[
E_n^{(1)}(F,Y_s)>\frac A{n^r}\omega_4\left(F^{(r)},\frac1n\right).
\]
\end{theorem}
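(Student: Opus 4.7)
The plan is to construct, for each $n \ge 1$, an explicit pair $(Y_s, F)$ with $F \in C^r \cap \Delta^{(1)}(Y_s)$ satisfying $(\ref{l51})$. The strategy parallels that of Theorem~\ref{L53}, adapted to the $\omega_4$ regime with $r = 2s-1$ odd.

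I would first choose $Y_s = \{y_i\}$ with uniformly (and widely) spaced knots, reducing the analysis to a single change point placed at $y_1 = 0$. Setting $h = 1/n$, I concentrate the interesting behaviour of $F$ on a short interval $I$ around $0$ of length $\sim h^{2/3}$. That non-Jackson length scale is the geometric mean between the Jackson scale $h$ and the larger Bernstein scale $h^{1/3}$, and is what eventually yields the exponent $1/3$. On $I$ I would take $F^{(r)}$ to be a piecewise cubic ``hat'' of amplitude $\sim h^3$, designed so that (a) its fourth differences vanish off $O(h)$-neighbourhoods of the hat's knots, giving $\omega_4(F^{(r)}, h) \le c(r)\,h^3$, and (b) after $r$-fold integration from $y_1$ the resulting $F$ satisfies $F'(x)\Pi(x) \ge 0$ everywhere, i.e.\ $F \in \Delta^{(1)}(Y_s)$.

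Second, I would establish the lower bound $E_n^{(1)}(F, Y_s) \ge c(r)\,h^{r+8/3}$; combined with the modulus estimate above this yields $(\ref{l51})$. For this I use a linear functional $L \in C^*$, supported in a neighbourhood of $y_1$ of length $\sim h^{2/3}$, with the properties
\[
L(T) = 0 \text{ for every } T \in \Delta^{(1)}(Y_s) \text{ of trigonometric degree } < n, \quad L(F) \ge c(r)\,h^{r+8/3}, \quad \|L\| \le c(r),
\]
whence $\|F - T\| \ge L(F)/\|L\|$ gives the desired bound. The functional is defined by $(r+1)$-fold integration by parts against a test function $\eta$ supported in $I$; the shape of $\eta$ is arranged so that, after using the factorization $T'(x) = \Pi(x) Q(x)$ with $Q \ge 0$ of trigonometric degree at most $n - s - 1$ (valid for any smooth comonotone $T$), the resulting pairing against $Q$ vanishes for every non-negative trigonometric polynomial of that degree on $I$.

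The main obstacle is precisely this last point: making $L$ annihilate every \emph{comonotone} polynomial, since comonotonicity is a one-sided cone rather than a linear subspace. The resolution goes through a Bernstein--Markov-type bound on the short interval $I$ of length $h^{2/3}$, where the effective polynomial degree is $n \cdot h^{2/3} = n^{1/3}$; this means the weighted $L^1$ norm of $Q$ on $I$ controls its sup-norm only up to the factor $n^{1/3}$, and that factor is precisely what produces the exponent $1/3$ in $\sqrt[3]{n}$. Verifying the precise scaling---in particular, checking that $L(F)$ really is of order $h^{r + 8/3}$ and not smaller after all the integrations by parts---is the delicate computation at the technical heart of the proof, and the second assertion of the theorem then follows immediately since $\sqrt[3]{n} \to \infty$.
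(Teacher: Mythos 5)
Your proposal has two genuine gaps, and the first one is fatal to the approach as stated. You choose $Y_s$ with ``uniformly (and widely) spaced knots'' and reduce to a single change of monotonicity near $0$. But for $s\ge2$ the pair $(r,k)=(2s-1,4)$ lies in the ``$\oplus$'' region of the tables: by $(\ref{lm})$, for any \emph{fixed} collection $Y_s$ with separated points one has $E_n^{(1)}(F,Y_s)\le c\,n^{-r}\omega_4\bigl(F^{(r)},1/n\bigr)$ for all $n\ge N(Y_s)$, which directly contradicts the lower bound $(\ref{l51})$ you are trying to prove. The counterexample \emph{must} use a collection $Y_s$ depending on $n$, with the $2s-1$ interior points $y_2,\dots,y_{2s}$ clustered in an interval of length $o(1/n)$ (in the paper, inside $(-\delta,\delta)$ with $\delta<b=n^{-4/3}$). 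That clustering is the entire mechanism: since $T_n'(y_i)=0$ for $2\le i\le 2s$ and $T_n'(\delta)\ge0$, the divided difference $[y_2,\dots,y_{2s},\delta;T_n']\ge0$ produces a point $\theta$ in the cluster with $T_n^{(r+1)}(\theta)\ge0$, while the target satisfies $\mathcal T^{(r+1)}(\theta)=\tau^{(r)}(\theta)\le-b^2/4$; Bernstein's inequality $\|T_n^{(r+1)}\|\le n^{r+1}\|T_n\|$ then converts this derivative discrepancy into the sup-norm lower bound, and the choice $b=n^{-4/3}$ balances it against $\omega_4(F^{(r)},1/n)\le c_7n^{-4}+c_8b^3$. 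None of this survives if the knots are spread out.

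The second gap is the duality functional. You require $L(T)=0$ for \emph{every} comonotone $T_n$ of degree $<n$; since the comonotone cone spans essentially all of the polynomial space, such an $L$ with $\|L\|\le c$ and $L(F)\gtrsim h^{r+8/3}$ cannot exist (the correct duality statement would need $L(T)\le0$ on the cone, and even then you would have to construct $L$). You explicitly flag this as ``the main obstacle'' and offer only a heuristic Bernstein--Markov scaling on an interval of length $h^{2/3}$ in its place, so the technical heart of the proof is missing. Note also that your scalings do not match a workable construction: the paper localizes on an interval of length $\sim n^{-4/3}$ (not $n^{-2/3}$) and gets $\omega_4(F^{(r)},1/n)\sim n^{-4}$ (not $n^{-3}$). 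The actual proof needs no functional at all --- only Lemma~\ref{m1} applied to $T_n'$ at the clustered zeros, plus Bernstein's inequality.
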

Our last positive result is
\begin{theorem}\label{T3} Let $r=2s$ and let $k\ge3$. If $f\in C^r\cap\Delta^{(1)}(Y_s)$, then
\begin{equation}\label{151}
E_n^{(1)}(f,Y_s)\le\frac{c(r,k)}{n^r}\omega_k\left(f^{(r)},\frac1n\right),\quad n\ge1.
\end{equation}
\end{theorem}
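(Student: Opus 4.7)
The plan is to construct the comonotone polynomial $T_n$ through its derivative: I seek a trigonometric polynomial $S$ of degree $<n$ satisfying $S(x)\Pi(x)\ge 0$ on $\mathbb R$ and $\int_{-\pi}^{\pi} S(t)\,dt = 0$, and then define
\[
T_n(x) := f(y_1) + \int_{y_1}^x S(t)\,dt.
\]
The zero-mean condition makes $T_n$ a $2\pi$-periodic trigonometric polynomial of degree $<n$, the sign condition guarantees that $T_n\in\Delta^{(1)}(Y_s)$, i.e.\ comonotone with $f$, and the approximation estimate for $\|f-T_n\|$ reduces to controlling the primitive of $f'-S$.

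To build $S$, I first observe that, since $f'\Pi\ge 0$ and both $f'$ and $\Pi$ have simple zeros at each $y_i$, the ratio $g:=f'/\Pi$ (defined by continuity at the $y_i$) is a nonnegative function in $C^{2s-2}$, whose modulus of smoothness $\omega_k(g^{(2s-2)},\cdot)$ is controlled by $\omega_k(f^{(2s)},\cdot)$. Applying the positive Jackson-type estimate for periodic nonnegative functions (Lemma~\ref{lss}), with the behavior near the $y_i$ handled via Lemma~\ref{4.6} and Lemma~\ref{mon}, one obtains a nonnegative trigonometric polynomial $q$ of degree $<n-s$ with $\|g-q\|$ small at the corresponding rate. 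Setting $S_0:=\Pi q$ gives $S_0\Pi=\Pi^2 q\ge 0$, while $\int_{-\pi}^\pi S_0=\int_{-\pi}^\pi \Pi(q-g)$ is automatically small because $\int\Pi g=\int f'=0$. A correction of the form $\alpha\Pi\phi$, where $\phi$ is a nonnegative Jackson-type polynomial localized in a region where $\Pi$ keeps one sign and $\alpha\in\mathbb R$ is chosen to annul the integral, then yields the required $S:=\Pi(q+\alpha\phi)$.

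The main obstacle is to recover the full Jackson rate $n^{-2s}\omega_k(f^{(2s)},1/n)$ in $\|f-T_n\|$ from this construction, since the passage from $f$ to $g=f'/\Pi$ loses two powers of $n^{-1}$ (one from differentiation, one from the division by $\Pi$, whose simple zeros coincide with those of $f'$). The remedy is to rewrite
\[
f(x)-T_n(x)=\int_{y_1}^x \Pi(t)\bigl(g(t)-q(t)-\alpha\phi(t)\bigr)\,dt
\]
and integrate by parts twice against a trigonometric-polynomial primitive of $\Pi-\overline{\Pi}$, where $\overline{\Pi}$ denotes the mean value of $\Pi$; each integration by parts gains a factor of $n^{-1}$ via the simultaneous approximation bounds on the first and second derivatives of $g-q-\alpha\phi$ supplied by Lemma~\ref{lss}. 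It is precisely at this step that the hypothesis $k\ge 3$ is essential: the third difference of $f^{(2s)}$ supplies exactly enough regularity both to drive the positive approximation of $g^{(2s-2)}$ at the right rate and to absorb the two rounds of integration by parts, closing the two-order gap and yielding the claimed estimate \eqref{151}.
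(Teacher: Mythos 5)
Your approach is genuinely different from the paper's (which builds a comonotone \emph{piecewise algebraic} polynomial on a partition of mesh $\pi/n$, using Lemma~\ref{mon} off the neighborhoods of the $y_i$ and Lemma~\ref{4.7} on them, and then invokes Lemma~\ref{lss} to convert it into a comonotone trigonometric polynomial), but as written it has gaps that I do not see how to close. The most serious one is uniformity in $Y_s$: when you pass to $g=f'/\Pi$, the size of $g^{(2s-2)}$ and of $\omega_k(g^{(2s-2)},\cdot)$ near $y_i$ involves $1/\Pi'(y_i)=2/\prod_{j\ne i}\sin\frac{y_i-y_j}{2}$, which blows up as the points $y_j$ coalesce. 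Any constant extracted this way therefore depends on the minimal gap between the $y_i$, so at best you would reprove the known estimate $(\ref{lm})$, valid only for $n\ge N(r,k,Y_s)$, rather than $(\ref{151})$ with $c=c(r,k)$ independent of $Y_s$ --- and that uniformity is the entire content of Theorem~\ref{T3}. The paper never divides by $\Pi$; the degenerate configuration in which all the $y_i$ cluster inside one component is handled separately by Lemma~\ref{4.1}, which shows that a constant already approximates $f$ at the full rate there.

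The second gap is the two-order deficit you yourself identify, whose proposed remedy does not work: a trigonometric primitive of $\Pi-\overline{\Pi}$ is a fixed polynomial of degree $s$ with norm of order $1$, not $1/n$, so integrating by parts against it gains no factor of $n^{-1}$; and simultaneous-approximation bounds control $\|g'-q'\|$ by roughly $n\|g-q\|$ plus lower-order terms, i.e., differentiating the error \emph{loses} a power of $n$ rather than supplying one. In addition, Lemma~\ref{lss} is not a Jackson-type estimate for positive approximation of nonnegative periodic functions (that statement is anyway immediate: add the uniform error to a best unconstrained approximant); it is the bridge from comonotone piecewise polynomials to comonotone trigonometric polynomials, and it is the engine of the paper's argument, not of the step at which you invoke it.
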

\begin{remark} Theorem $\ref{T3}$, evidently implies $(\ref{151})$ also for $k=1,2$, which were proved in \cite{LSS}*{Theorem 1.1}.
\end{remark}
Finally, if $0\le r<2s-2$ and $k\ge2$, then (\ref{m}) cannot be had. Namely,
\begin{theorem}\label{L51} Let $0\le r<2s-2$. There is a constant $c(r)>0$, such that for each $n\ge1$, there exist a collection $Y_s$ and a function $F\in C^r\cap\Delta^{(1)}{(Y_s)}$, such that
\begin{equation}\label{l52}
E_n^{(1)}(F,Y_s)> n\frac{c(r)}{n^r}\omega_2\left(F^{(r)},\frac1n\right).
\end{equation}
Hence, for every $A>0$ and $N>0$, there exist $n>N$, $Y_s$ and an $F\in C^r\cap\Delta^{(1)}{(Y_s)}$, such that
\[
E_n^{(1)}(F,Y_s)>\frac A{n^r}\omega_2\left(F^{(r)},\frac1n\right).
\]
\end{theorem}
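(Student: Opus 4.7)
The plan is a counterexample construction. The second conclusion of the theorem (the ``for every $A$ and $N$'' statement) follows immediately from the first by fixing the constant $c(r)$ and choosing $n$ larger than both $N$ and $A/c(r)$; so the whole content lies in constructing, for each $n\ge 1$, a collection $Y_s$ and a function $F\in C^r\cap\Delta^{(1)}(Y_s)$ satisfying (\ref{l52}).

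Fix $n\ge 1$ and set $h=c_0/n$ for a small absolute constant $c_0$. I would take $Y_s$ with the first $2s-r-2\ge 1$ pairs of knots clustered in the short interval $(-h,h)$ at spacing of order $h/s$, and the remaining pairs spread out well away around the period. The precise knot positions in the cluster need to be tuned, but the point is that $r<2s-2$ gives us enough slack to cluster more knots than the smoothness index $r$ can accommodate. The function $F$ is built by prescribing $F^{(r)}$ to be a sum of translated and scaled $C^\infty$ bumps $\pm\psi\bigl((x-\xi_j)/h\bigr)$, centered at the midpoints $\xi_j$ of the gaps between consecutive clustered knots, with alternating signs chosen so that after integrating $r$ times (with the free constants fixed so that $F\in C$ is $2\pi$-periodic) the resulting $F$ satisfies $F'(x)\Pi(x)\ge 0$ everywhere, i.e., $F\in\Delta^{(1)}(Y_s)$. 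By design, neighboring bumps of $F^{(r)}$ sit at distance of order $h$, so their second differences at scale $h$ cancel to leading order: one has $\omega_2(F^{(r)},1/n)\le\mu$ where $\mu$ is smaller, by an additional factor of at least $h=1/n$, than the peak amplitude of $F^{(r)}$.

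For the lower bound, let $T_n\in\Delta^{(1)}(Y_s)$ be arbitrary. By the comonotonicity of $T_n$ with $F$, the derivative $T_n'$ must change sign at each clustered $y_i\in (-h,h)$, so $T_n'$ has at least $2s-r-2$ sign changes inside an interval of length $2h=O(1/n)$. I would pick test points $\eta_j$ at the peaks of $F$ inside $(-h,h)$ (points where $F$ attains a local extremum of order $h^r$ times the peak amplitude of $F^{(r)}$), and compare $F(\eta_j)$ with $T_n(\eta_j)$. A Chebyshev/Markov-type inequality for trigonometric polynomials applied to $T_n'$ on a short interval, combined with the sign-change constraints, forces $\|T_n-F\|$ to be bounded below by a multiple of $h^{r-1}\mu$, i.e., by $n^{1-r}\mu$. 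Since $\mu$ can be chosen comparable to $\omega_2(F^{(r)},1/n)$, this yields $E_n^{(1)}(F,Y_s)\ge c(r)\,n^{1-r}\omega_2(F^{(r)},1/n)$, which is exactly (\ref{l52}).

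The main obstacle is the delicate calibration of the bump construction so that the peak amplitude of $F^{(r)}$ is genuinely larger than $\omega_2(F^{(r)},1/n)$ by the factor $n$, while simultaneously $F$ itself carries enough oscillation to enforce comonotonicity on the tightly clustered $Y_s$. The assumption $r<2s-2$ is what permits this: the gap between the smoothness order $r$ and the oscillation count of the cluster yields the asserted extra factor of $n$. A secondary obstacle is converting the sign-change constraint on $T_n'$ into the sharp quantitative lower bound on $\|T_n-F\|$; this typically requires an estimate of Markov/Bernstein type for trigonometric polynomials with many prescribed zeros on a short arc, applied to $T_n'$ minus an appropriately chosen comparison polynomial.
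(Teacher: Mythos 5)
Your reduction of the second assertion to the first is fine, and your overall instinct (cluster many knots in a short interval, exploit $r<2s-2$, and use the forced zeros of $T_n'$ there) is the right one. But the construction you propose has a fatal flaw at its central step: the claim that alternating bumps $\pm\psi\bigl((x-\xi_j)/h\bigr)$ in $F^{(r)}$, at spacing of order $h\asymp 1/n$, produce cancellation in second differences so that $\omega_2\bigl(F^{(r)},1/n\bigr)$ is smaller than the peak amplitude $M$ of $F^{(r)}$ by a factor $1/n$. This is false. If $F^{(r)}$ contains a bump of height $M$ and width $\le 1/n$, then on any window $J$ of length $2/n$ containing that bump the distance of $F^{(r)}$ to linear functions is $\ge cM$, so by Whitney's inequality $\omega_2\bigl(F^{(r)},1/n\bigr)\ge cM$; concretely, taking the step $\delta$ equal to the bump width and centering the second difference at the peak gives $|\Delta_\delta^2F^{(r)}|\approx 2M$. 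Alternating signs of neighboring bumps makes this worse, not better (for a $2h$-periodic oscillation, $\Delta_h^2 g\approx 4g$). So your $\mu$ cannot be $\ll M$, and the whole calibration $E_n^{(1)}\gtrsim n^{1-r}\mu\gtrsim n^{1-r}\omega_2$ collapses. A secondary problem is the knot count: you cluster only $2s-r-2$ knots, which need not exceed $r$, so the sign-change information is generally insufficient to control $T_n^{(r+1)}$; and the ``Markov/Bernstein with prescribed zeros'' step is left entirely unproved, which is where all the quantitative content would have to live.

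The paper's proof works quite differently and avoids both problems. It starts from a single globally smooth function $\tau$ ($\sin^{r+1}x$ up to sign and scaling) with $\tau^{(j)}(0)=0$ for $0\le j\le r$ and $|\tau^{(r+1)}|\ge\frac12$ near $0$, and sets $F=G(\cdot/b)\tau$ with $b=n^{-2}$, so that $F$ vanishes identically on $[-b,b]$. The smallness of $\omega_2\bigl(F^{(r)},1/n\bigr)\le c n^{-2}+cb$ comes not from any cancellation but from the fact that $F^{(r)}-\tau^{(r)}$ is supported on $[-2b,2b]$ where $\|\tau^{(r)}\|=O(b)$, while $\tau$ itself is smooth with bounded $(r+2)$nd derivative. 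All of $y_2,\dots,y_{2s-1}$ (resp.\ $y_2,\dots,y_{2s}$) are placed in $[-b,b]$, which by $r+1\le 2s-2$ gives at least $r+1$ zeros of $T_n'$ there; Rolle then yields $T_n^{(r+1)}(\theta)=0$ for some $\theta\in(-b,b)$, and Bernstein's inequality applied to $\tau-T_n$ gives $\|\tau-T_n\|\ge\frac12 n^{-(r+1)}$ up to the negligible correction $\|\tau-F\|\le(2b)^{r+1}$. That is the mechanism you would need to reproduce; as written, your proposal does not contain a workable substitute for it.
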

We can now summarize the results. Given $s\ge1$, the pairs $(r,k)$, for  which (\ref{m}) holds, are
$k=1$ and $r\ge0$, $k=2$ and $r\ge 2s-2$, $k=3$ and $r\ge 2s-1$, and $k\ge 4$ and $r\ge 2s$.  For all other pairs (\ref{m}) is invalid. In the tables below we denote by ``$+"$ the pairs $(r,k)$ for which (\ref{m}) is valid.
We denote by ``$\oplus"$ the pairs $(r,k)$ for which (\ref{m}) is invalid but (\ref{lm}) is valid, and finally, ``$-"$ denotes that even (\ref{lm}) is invalid.
$$
\begin{matrix}
r & \vdots & \vdots & \vdots & \vdots & \vdots & \updots \\
2s+1 & + & + & + & + & + & \cdots \\
2s & + & + & + & + & + & \cdots \\
2s-1 & + & + & + & \oplus & \oplus & \cdots \\
2s-2 & +  & + & \oplus & \oplus & \oplus & \cdots \\
2s-3 & + & \oplus & \oplus & \oplus & \oplus & \cdots \\
\vdots & \vdots & \vdots & \vdots & \vdots & \vdots & \vdots \\
2 & + & \oplus & \oplus & \oplus & \oplus & \cdots \\
1 & + & \oplus & \oplus & - & - & \cdots \\
0 & + & \oplus & - & - & - & \cdots \\
 & 1 & 2& 3 & 4 & 5 & k
\end{matrix}
\\
\qquad\qquad
\begin{matrix}
r & \vdots & \vdots & \vdots & \vdots & \vdots & \updots \\
5 & + & + & + & + & + & \cdots \\
4 & + & + & + & + & + & \cdots \\
3 & + & + & + & \oplus & \oplus & \cdots \\
2 & + & + & \oplus & \oplus & \oplus & \cdots \\
1 & + & \oplus & \oplus & - & - & \cdots \\
0 & + & \oplus & - & - & - & \cdots \\
 & 1 & 2& 3 & 4 & 5 & k
\end{matrix}
$$
$$
s>2\qquad\qquad\qquad\qquad\qquad\qquad s=2
$$
$$
\begin{matrix}
r & \vdots & \vdots & \vdots & \vdots & \vdots & \updots \\
3 & + & + & + & + & + & \cdots \\
2 & + & + & + & + & + & \cdots \\
1 & + & + & + & - & - & \cdots \\
0 & + & + & - & - & - & \cdots \\
 & 1 & 2& 3 & 4 & 5 & k
\end{matrix}
$$
$$
s=1
$$
We emphasize that these tables differ substantially from the corresponding tables for comonotone approximation of non-periodic functions on a finite interval (see, e.g, \cite{KLPS}).

\sect{Divided differences}
Let $m\in\mathbb N$ and let the function $g$ be defined at the distinct points $t_i\in\mathbb R$, $0\le i\le m$. Denote $[t_0;g]:=g(t_0)$ and by
\[
[t_0,t_1,\dots,t_m;g]:=\sum_{i=0}^m\frac{g(t_i)}{\prod_{j=0,\,j\ne i}^m(t_i-t_j)},
\]
the divided difference of order $m$ of $g$ at the  knots $t_i$.
\begin{lemma}\label{m1} Let $t_0<t_1<\cdots<t_m$. Assume that either
\begin{equation}\label{dba}
(-1)^{m-i}\bigl(g(t_i)-g(t_{i-1})\bigr)\ge0,\quad 1\le i\le m,
\end{equation}
or
\begin{equation}\label{dbb}
(-1)^{m-i}\bigl(g(t_i)-g(t_{i-1})\bigr)\le0,\quad 1\le i\le m.
\end{equation}
Then, respectively,
\begin{equation}\label{da}
[t_0,\dots,t_m;g]\ge0,\quad\text{or}\quad [t_0,\dots,t_m;g]\le0,
\end{equation}
and
\begin{equation}\label{dd}
(t_m-t_0)^m\bigl|[t_0,t_1,\dots,t_m;g]\bigr|\ge \max_{0\le i\le m}g(t_i)-\min_{0\le i\le m}g(t_i),
\end{equation}
Therefore, for $m\ge2$,
\begin{equation}\label{df}
\bigl|[t_0,\dots,t_{m};g]\bigr|=\frac{\bigl|[t_1,\dots,t_{m};g]\bigr|+\bigl|[t_0,\dots,t_{m-1};g]\bigr|}{t_m-t_0},
\end{equation}
and if $2\le r\le m$, then
\begin{equation}\label{dd2}
\bigl|[t_0,\dots,t_m;g]\bigr|\prod_{i=r}^m(t_i-t_0)
\ge\bigl|[t_1,\dots,t_r;g]\bigr|.
\end{equation}
\end{lemma}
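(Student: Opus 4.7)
The plan is to prove the four assertions in the order \eqref{da}, \eqref{df}, \eqref{dd2}, \eqref{dd}, with the first two established together by induction on $m$ and the last two obtained as consequences. The base case $m=1$ is immediate since $[t_0,t_1;g]=(g(t_1)-g(t_0))/(t_1-t_0)$ has the required sign and $|[t_0,t_1;g]|(t_1-t_0)=|g(t_1)-g(t_0)|$.

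For \eqref{da} and \eqref{df}, I would use the standard recursion
\[
[t_0,\ldots,t_m;g]=\frac{[t_1,\ldots,t_m;g]-[t_0,\ldots,t_{m-1};g]}{t_m-t_0},
\]
together with the key parity observation: when \eqref{dba} on $\{t_0,\ldots,t_m\}$ is restricted to $\{t_1,\ldots,t_m\}$ and reindexed, it becomes the order-$(m-1)$ version of \eqref{dba}; when restricted to $\{t_0,\ldots,t_{m-1}\}$ it becomes the order-$(m-1)$ version of \eqref{dbb}, because the parity of $m-i$ flips when $m$ is replaced by $m-1$. By the inductive hypothesis the two divided differences on the right therefore have opposite signs, so their difference is nonnegative and its absolute value equals $|[t_1,\ldots,t_m;g]|+|[t_0,\ldots,t_{m-1};g]|$; dividing by $t_m-t_0$ yields both \eqref{da} and \eqref{df} simultaneously. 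The case \eqref{dbb} follows by replacing $g$ with $-g$.

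Assertion \eqref{dd2} then follows by iterating \eqref{df}. Discarding the nonnegative term $|[t_1,\ldots,t_m;g]|$ gives $|[t_0,\ldots,t_m;g]|(t_m-t_0)\ge|[t_0,\ldots,t_{m-1};g]|$, and repeating this to drop the last index down to $r+1$ produces $|[t_0,\ldots,t_m;g]|\prod_{j=r+1}^m(t_j-t_0)\ge|[t_0,\ldots,t_r;g]|$. A final application of \eqref{df} to $[t_0,\ldots,t_r;g]$, this time discarding $|[t_0,\ldots,t_{r-1};g]|$, supplies the missing factor $(t_r-t_0)$ and replaces $|[t_0,\ldots,t_r;g]|$ by $|[t_1,\ldots,t_r;g]|$.

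Finally, \eqref{dd} is proved by induction on $m$ using \eqref{df}. Bounding the factors $(t_m-t_1)^{m-1}$ and $(t_{m-1}-t_0)^{m-1}$ coming from the inductive hypothesis by the larger $(t_m-t_0)^{m-1}$, one arrives at
\[
(t_m-t_0)^m|[t_0,\ldots,t_m;g]|\ge\bigl(\max_{1\le i\le m}g(t_i)-\min_{1\le i\le m}g(t_i)\bigr)+\bigl(\max_{0\le i\le m-1}g(t_i)-\min_{0\le i\le m-1}g(t_i)\bigr).
\]
The main (and essentially only nontrivial) obstacle is to show that this sum dominates the full range $\max_{0\le i\le m}g(t_i)-\min_{0\le i\le m}g(t_i)$. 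I would split into cases by the location of the global extrema: if both are attained at interior nodes, each sub-range already covers the full range; if exactly one of them lies at an endpoint, the sub-sequence containing that endpoint still contains the other extremum as well, so its range already equals the full one; the delicate case is when the global maximum is $g(t_0)$ and the global minimum is $g(t_m)$ (the symmetric case is handled analogously). In this last case, the fact that $m\ge2$ provides an interior index $l\in\{1,\ldots,m-1\}$ with $\max_{1\le i\le m}g(t_i)\ge g(t_l)\ge\min_{0\le i\le m-1}g(t_i)$, and the two sub-ranges telescope to $g(t_0)-g(t_m)$ plus this nonnegative remainder, completing the proof.
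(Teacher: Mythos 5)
Your proof is correct, but it reaches the two substantive conclusions \eqref{da} and \eqref{dd} by a genuinely different route from the paper. The paper replaces $g$ by its interpolating polynomial of degree $\le m$, deduces from the alternation hypothesis that $g'(x)=a\prod_{i=1}^{m-1}(x-x_i)$ with all $x_i\in(t_0,t_m)$, reads off \eqref{da} from $[t_0,\dots,t_m;g]=a/m$, and obtains \eqref{dd} by bounding $\max_i g(t_i)-\min_i g(t_i)\le\int_{t_0}^{t_m}|g'|$ via an arithmetic--geometric mean estimate on $\prod|x-x_i|$; only \eqref{df} and \eqref{dd2} are then derived, exactly as you do, from the recursion and the sign information. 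Your argument instead runs the whole lemma through the recursion: the parity observation (restriction to $\{t_1,\dots,t_m\}$ preserves the type of alternation, restriction to $\{t_0,\dots,t_{m-1}\}$ flips it) gives \eqref{da} and \eqref{df} in one induction, and \eqref{dd} follows by a second induction whose only delicate point --- that the two sub-ranges dominate the full range --- you resolve correctly by the case analysis on where the global extrema sit, the extremal case using an interior index $l$ common to both sub-sequences (this is where $m\ge2$ enters). What each approach buys: the paper's calculus argument dispenses with any case analysis and delivers \eqref{dd} in one stroke, at the cost of invoking the interpolant and justifying the factorization of its derivative from weak inequalities; yours is entirely elementary and self-contained, trading that for the bookkeeping in the final induction. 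Both are complete proofs of the statement.
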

\begin{proof} Since all statements depend only on $g(t_i)$, $0\le i\le m$, and there is an algebraic polynomial of degree $\le m$, interpolating $g$ at $t_i$, $1\le i\le m$, without loss of generality we assume that $g$ is that polynomial. Then (\ref{dba}), respectively, (\ref{dbb}) implies that $g'(x)=a\prod_{i=1}^{m-1}(x-x_i)$, where $a\ge0$, respectively, $a\le0$, and $x_i\in(t_0,t_m)$, $1\le i\le m-1$. Note that $g^{(m)}(x)\equiv(m-1)!a$, so that $[t_0,\dots,t_m;g]=\frac am$. Thus, (\ref{da}) holds.

By the inequality between the arithmetic and geometric means,
\begin{align*}
\int_{t_0}^{t_m}|g'(x)|\,dx&=|a|\int_{t_0}^{t_m}\prod_{i=1}^{m-1}|x-x_i|\,dx\le\frac{|a|}{m-1}\sum_{i=1}^{m-1}\int_{t_0}^{t_m}|x-x_i|^{m-1}\,dx\\
&=\frac{|a|}{m(m-1)}\sum_{i=1}^{m-1}\bigl((t_m-x_i)^m+(x_i-t_0)^m \bigr)\le\frac{|a|}m(t_m-t_0)^m.
\end{align*}
Hence,
\begin{align*}
\max_{0\le i\le m}g(t_i)-\min_{0\le i\le m}g(t_i)&\le\int_{t_0}^{t_m}|g'(x)|\,dx\le\frac{|a|}m(t_m-t_0)^m\\
&=\bigl|[t_0,t_1,\dots,t_m;g]\bigr|(t_m-t_0)^m
\end{align*}
Thus, (\ref{dd}) is proved. Finally, (\ref{da}) readily implies (\ref{df}), and (\ref{dd2}) follows from (\ref{df}) by induction on $m-r$.
\end{proof}
\begin{lemma}\label{dl} Let $0\le l<m$. If $f\in C^l[a,b]$  and $x_i\in[a,b]$, $0\le i\le m$, are distinct, then
\begin{equation}\label{ineq}
|[x_0,\dots,x_m;f]|
\le\frac c{l!}\omega_{m-l}(f^{(l)},b-a;[a,b])\sum_{j=l}^m\frac1{\prod_{i=l,i\ne j}^m|x_j-x_i|},
\end{equation}
where $c$ is an absolute constant.
\end{lemma}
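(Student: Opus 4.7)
The plan is to reduce the estimate to Whitney's theorem via a Newton-form interpolation argument. The key starting observation is that an order-$m$ divided difference annihilates every polynomial of degree less than $m$, so we are free to subtract such a polynomial from $f$ without changing the left-hand side; this freedom will eventually let us upgrade $\|f^{(l)}\|$ to a modulus of smoothness.

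First I single out the first $l$ knots. Let $L$ be the polynomial of degree less than $l$ interpolating $f$ at $x_0,\ldots,x_{l-1}$ (for $l=0$ take $L=0$, so the step is vacuous and the argument remains uniform). Replacing $f$ by $f-L$ in the explicit formula for the divided difference kills the terms $j<l$, and the Newton remainder identity
\[
(f-L)(x_j)=\prod_{i=0}^{l-1}(x_j-x_i)\cdot [x_0,\ldots,x_{l-1},x_j;f]
\]
cancels the matching factors in the denominator, leaving
\[
[x_0,\ldots,x_m;f]=\sum_{j=l}^{m}\frac{[x_0,\ldots,x_{l-1},x_j;f]}{\prod_{i=l,\,i\ne j}^{m}(x_j-x_i)}.
\]
Since $f\in C^l[a,b]$, the mean value theorem for divided differences gives $\bigl|[x_0,\ldots,x_{l-1},x_j;f]\bigr|\le\|f^{(l)}\|_{[a,b]}/l!$, whence
\[
\bigl|[x_0,\ldots,x_m;f]\bigr|\le\frac{\|f^{(l)}\|_{[a,b]}}{l!}\sum_{j=l}^{m}\frac{1}{\prod_{i=l,\,i\ne j}^{m}|x_j-x_i|}.
\]

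To finish, I rerun the whole argument with $f$ replaced by $f-Q$ for an arbitrary polynomial $Q$ of degree less than $m$. Since $[x_0,\ldots,x_m;Q]=0$, this leaves the left-hand side unchanged but replaces $f^{(l)}$ by $f^{(l)}-Q^{(l)}$ on the right. As $Q$ ranges over polynomials of degree less than $m$, $Q^{(l)}$ ranges over all polynomials of degree less than $m-l$, so Whitney's theorem permits the choice
\[
\|f^{(l)}-Q^{(l)}\|_{[a,b]}\le c\,\omega_{m-l}\bigl(f^{(l)},b-a;[a,b]\bigr),
\]
which immediately yields the claimed bound.

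I do not anticipate any serious obstacle; all three ingredients (Newton's remainder, the mean value theorem for divided differences, and Whitney's approximation theorem) are classical. The only point that requires a little care is the combinatorial cancellation in the first step and checking that the factor $1/l!$ ends up outside the sum rather than inside it.
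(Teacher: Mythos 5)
Your proof is correct, and its overall skeleton coincides with the paper's: both first establish the intermediate estimate
\[
\bigl|[x_0,\dots,x_m;f]\bigr|\le\frac{\|f^{(l)}\|_{[a,b]}}{l!}\sum_{j=l}^m\frac1{\prod_{i=l,\,i\ne j}^m|x_j-x_i|},
\]
and then upgrade $\|f^{(l)}\|$ to $\omega_{m-l}(f^{(l)},b-a;[a,b])$ by subtracting a polynomial $Q$ of degree $<m$ with $Q^{(l)}$ equal to the Whitney approximant of $f^{(l)}$ --- that final step is word for word the paper's. Where you genuinely diverge is in how the intermediate bound is obtained. The paper uses the Hermite--Genocchi integral representation $[x_0,\dots,x_l;f]=\int_0^1\int_0^{u_1}\cdots\int_0^{u_{l-1}}f_l(x_l;U_l)\,du_l\cdots du_1$ and an induction on $m$ to push the extra knots $x_{l+1},\dots,x_m$ inside the integral, the $1/l!$ arising as the volume of the simplex. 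You instead subtract the degree-$<l$ interpolant $L$ at $x_0,\dots,x_{l-1}$, use the Newton remainder identity to expose the factors $\prod_{i=0}^{l-1}(x_j-x_i)$ that cancel in the denominators, and then invoke the mean value theorem for divided differences to bound each $[x_0,\dots,x_{l-1},x_j;f]$ by $\|f^{(l)}\|/l!$. Your route is purely algebraic plus one classical mean value statement, avoids the iterated-integral machinery and the induction, and arguably makes the provenance of both the $1/l!$ and the truncated product $\prod_{i=l,\,i\ne j}^m$ more transparent; the paper's integral representation is slightly more robust in that it does not require the separate mean value theorem as an input. Both are complete and yield the identical inequality.
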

\begin{proof}
For $l\ge1$, let $U_l:=\{u_i\}_{i=1}^l$ and denote $f_l(x;U_l):=f^{(l)}\bigl(x_0+u_1(x_1-x_0)+\cdots+u_l(x-x_{l-1})\bigr)$.

By virtue of \cite{DL}{Chapter 4 (7.12)} we have,
\begin{align*}
[x_0,\dots,x_l;f]&=\int_0^1\int_0^{u_1}\dots\int_0^{u_{l-1}}f_l(x_l;U_l)\,du_l\cdots du_1\\
&=\int_0^1\int_0^{u_1}\dots\int_0^{u_{l-1}}[x_l;f_l(\cdot;U_l)]\,du_l\cdots du_1.
\end{align*}
Assume by induction that for some $m>l$,
\begin{equation}\label{dl1}
[x_0,\dots,x_{m-1};f]=\int_0^1\int_0^{u_1}\dots\int_0^{u_{l-1}}[x_l,\dots,x_{m-1};f_l(\cdot;U_l)]\,du_l\cdots du_1,
\end{equation}
we will prove (\ref{dl1}) for $m+1$.

Indeed,
\begin{align*}
&[x_0,\dots,x_m;f]=\frac{[x_0,\dots,x_{m-1};f]-[x_0,\dots,x_{m-2},x_m;f]}{x_{m-1}-x_m}\\
&=\int_0^1\int_0^{u_1}\dots\int_0^{u_{l-1}}\frac{[x_l,\dots,x_{m-1};f_l(\cdot;U_l)]-[x_l,\dots,x_{m-2},x_m;f_l(\cdot;U_l)]}{x_{m-1}-x_m}\,du_l\cdots du_1\\
&=\int_0^1\int_0^{u_1}\dots\int_0^{u_{l-1}}[x_l,\dots,x_m;f_l(\cdot;U_l)]\,du_l\cdots du_1.
\end{align*}
Hence,
\begin{equation}\label{dl2}
\bigl|[x_0,\dots,x_m;f]\bigr|\le\frac{\|f^{(l)}\|_{[a,b]}}{l!}\sum_{j=l}^m\frac1{\prod_{i=l,i\ne j}^m|x_j-x_i|}.
\end{equation}
Evidently, (\ref{dl2}) holds also for $l=0$.

Finally, by Whitney's inequality there is an algebraic polynomial $P$ of degree $<m-l$, such that $\|f^{(l)}-P\|\le c\omega_{m-l}(f^{(l)},b-a;[a,b])$. Since for any polynomial $Q$ such that $Q^{(l)}=P$, $[x_0,\dots,x_m;f]=[x_0,\dots,x_m;f-Q]$, we conclude that (\ref{dl2}) yields (\ref{ineq}).
\end{proof}
\sect{Auxiliary lemmas}
In the sequel the various positive constants $c$ and $c_i$ may depend only on $r\ge0$, $s\ge1$ and $k\ge2$. Also, we assume that $f\ne\const$.
\begin{lemma}\label{4.1} Let $h>0$ and $y_{2s}-y_1\le(6s-2)h$. If $f\in C^{2s-1}\cap\Delta^{(1)}(Y_s)$, then
\begin{equation}\label{101}
\|f-f(y_{2s})\|\le ch^{2s-1}\omega_{k+1}(f^{(2s-1)},h).
\end{equation}
Hence, if $f\in C^{2s}\cap\Delta^{(1)}(Y_s)$, then
\begin{equation}\label{102}
\|f-f(y_{2s})\|\le ch^{2s}\omega_k(f^{(2s)},h).
\end{equation}
\end{lemma}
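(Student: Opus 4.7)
The plan is to prove \ineq{101} first; \ineq{102} then follows via the standard interpolation $\omega_{k+1}(g,h)\le h\,\omega_k(g',h)$ applied to $g=f^{(2s-1)}\in C^1$.

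\textit{Reduction to oscillation at the knots.} Since $f\in\Delta^{(1)}(Y_s)$, on the long interval $[y_0,y_1]$ (of length $\ge 2\pi-(6s-2)h$) the function $f$ is nondecreasing from $f(y_0)=f(y_{2s})$ to $f(y_1)$, while on the cluster $[y_1,y_{2s}]$ its extrema are attained exactly at the $y_i$'s (local maxima at odd indices, local minima at even). Hence
$$
\|f-f(y_{2s})\| \le \max_{1\le i\le 2s} f(y_i) - \min_{1\le j\le 2s} f(y_j).
$$
The values $f(y_i)$ alternate in the sense of Lemma \ref{m1}\ineq{dbb} (with $m=2s-1$ and $t_j=y_{j+1}$), so \ineq{dd} yields
$$
\|f-f(y_{2s})\| \le (y_{2s}-y_1)^{2s-1}\,\big|[y_1,\ldots,y_{2s};f]\big| \le c\,h^{2s-1}\,\big|[y_1,\ldots,y_{2s};f]\big|.
$$

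\textit{Main step.} It remains to establish
$$
\big|[y_1,\ldots,y_{2s};f]\big| \le c\,\omega_{k+1}(f^{(2s-1)},h).
$$
I would adjoin $k+1$ auxiliary nodes $z_0,\ldots,z_k$ to $\{y_i\}_{i=1}^{2s}$, placed within a constant multiple of $h$ of $[y_1,y_{2s}]$ and chosen so that the enlarged $(2s+k+1)$-node set still satisfies the alternation hypothesis of Lemma \ref{m1}. Applying Lemma \ref{dl} to the enlarged set with $l=2s-1$ and $m=2s+k$ bounds the corresponding divided difference by $c\,h^{-(k+1)}\,\omega_{k+1}(f^{(2s-1)},h)$; the reciprocal-product sum is of order $h^{-(k+1)}$ when the $k+2$ ``tail'' nodes $x_{2s-1},\ldots,x_{2s+k}$ are $\Theta(h)$-separated. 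Lemma \ref{m1}\ineq{dd2} applied with $r=2s$ then extracts $|[y_1,\ldots,y_{2s};f]|$ with a compensating factor $\prod_{i=2s}^{m}(t_i-t_0)=O(h^{k+1})$, which cancels the $h^{-(k+1)}$.

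\textit{Principal difficulty.} Since $f$ is strictly monotone on every subinterval between consecutive $y_i$'s and on the flanking intervals $(y_0,y_1)$ and $(y_{2s},y_{2s+1})$, at most one auxiliary node can be inserted per monotone region without breaking the alternation required by Lemma \ref{m1}. Reaching $k+1$ insertions at the $O(h)$ scale is therefore the crux: it will likely require combining one boundary node in each flanking monotone interval with nodes imported from the periodic extension $y_{2s+j}=y_j+2\pi$ (whose $f$-values are prescribed by $2\pi$-periodicity and thus automatically preserve alternation), compensated by a Whitney-type device that keeps the modulus of $f^{(2s-1)}$ evaluated at scale $h$ rather than at the macroscopic scale $2\pi$.
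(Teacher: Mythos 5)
Your skeleton is the same as the paper's: derive \ineq{102} from \ineq{101} via $\omega_{k+1}(f^{(2s-1)},h)\le h\,\omega_k(f^{(2s)},h)$, bound $\|f-f(y_{2s})\|$ from below by a cluster divided difference via Lemma~\ref{m1}, bound an augmented divided difference of order $2s+k$ from above by $c\,h^{-(k+1)}\omega_{k+1}(f^{(2s-1)},h)$ via Lemma~\ref{dl} with $l=2s-1$, and compare. The reduction to $\max_i f(y_i)-\min_i f(y_i)$ and the verification of \ineq{dbb} for the cluster are correct. But there is a genuine gap exactly where you flag the ``principal difficulty'': you never produce the $k+1$ auxiliary nodes for which the hypotheses of Lemma~\ref{m1} hold for $f$ itself, and no such nodes exist at scale $h$ --- as you observe, each flanking monotone interval admits at most one alternation-preserving insertion, so only two extra nodes are available while $k+1\ge3$ are needed for $k\ge2$. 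Your proposed repair, importing nodes $y_{2s+j}=y_j+2\pi$ from the periodic extension, does preserve alternation, but those nodes sit at distance about $2\pi$ from the cluster; the interval $[a,b]$ in Lemma~\ref{dl} must then contain them, the modulus is measured at the macroscopic scale, the reciprocal products are of order $(2\pi)^{-(k+1)}$ rather than $h^{-(k+1)}$, and the compensating product in \ineq{dd2} is of order $(2\pi)^kh$; these effects do not cancel. The ``Whitney-type device'' that is supposed to bring the modulus back to scale $h$ is precisely the missing idea, and it is not a routine step.

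The paper resolves this differently. It keeps all $k+1$ extra nodes at scale $h$, namely $t_0=y_1-h$ and $t_{2s+i}=t_0+iH$ with $H=2^k6sh$, so Lemma~\ref{dl} gives the correct upper bound $h^{k+1}\bigl|[t_0,\dots,t_{2s+k};g]\bigr|\le c\,\omega_{k+1}(f^{(2s-1)},h)$ for $g=f-f(y_{2s})$. For the lower bound it does \emph{not} apply Lemma~\ref{m1} to $g$ but to the truncation $\tilde g$ that equals $g$ on $[t_0,y_{2s}]$ and $0$ to the right of $y_{2s}$; since $g(y_{2s})=0$, all differences of $\tilde g$ at the extra right-hand nodes vanish, the weak alternation \ineq{dba}/\ineq{dbb} holds trivially with $m=2s+k$, and \ineq{dd} together with \ineq{dd2} yields $\bigl|[t_0,\dots,t_{2s+k};\tilde g]\bigr|\ge c\|g\|h^{-(2s+k)}$. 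The price is the perturbation $[t_0,\dots,t_{2s+k};\tilde g-g]$, which is supported on the far nodes and is shown to be at most half of $\bigl|[t_0,\dots,t_{2s+k};\tilde g]\bigr|$ because the gap $t_{2s+1}-t_{2s}\ge H-6sh=(2^k-1)6sh$ dominates the cluster width $6sh$. This truncation-plus-perturbation argument is the step your proposal is missing.
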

\begin{proof}
Let $g(t):=f(t)-f(y_{2s})$ and note that $f\in \Delta^{(1)}(Y_s)$ implies $\|g\|=\|g\|_{[y_1,y_{2s}]}$. Set $H:=2^k6sh$, and we may assume that $kH<2\pi$, for otherwise, by Whitney's inequality, the periodicity of $f$ yields (\ref{101}). Let
$$
t_0:=y_1-h, \quad t_i:=y_i,\quad 1\le i\le 2s,\quad\text{and}\quad t_{{2s}+i}=t_0+iH,\quad1\le i\le k,
$$
and set
$
x_{2s}:=t_0,\quad x_i:=t_{i+1},\quad0\le i\le {2s}-1,\text{ and}\quad x_{{2s}+i}=t_{{2s}+i},\quad1\le i\le k.
$
Then $x_j-x_i\ge h$, $i\ne j$, ${2s}-1\le i,j\le {2s}+k$, so that by Lemma \ref{dl} with $l={2s}-1$ and $m=2s+k$,
we have
\begin{equation}\label{103}
h^{k+1}\bigl|[t_0,\dots,t_{{2s}+k};g]\bigr|\le c\omega_{k+1}(f^{({2s}-1)},h).
\end{equation}
Set
$$
\tilde g(t):=\begin{cases} g(t),\quad & t\in[t_0,t_{2s}],\\
0,\quad & t\in [t_{2s},t_0+2\pi),
\end{cases}
$$
and note that $\tilde g$ satisfies either (\ref{dba}) or (\ref{dbb}), with $m={2s}+k$. By (\ref{dd}) and (\ref{dd2})
\begin{align*}
(t_{2s}-t_1)^{{2s}-1}\bigl|[t_0,\dots,t_{{2s}+k};\tilde g]\bigr|\prod_{i={2s}}^{k+{2s}}(t_i-t_0)&\ge(t_{2s}-t_1)^{{2s}-1}\bigl|[t_1,\dots,t_{{2s}};\tilde g]\bigr|\\
&\ge \max_{1\le i\le {2s}}\tilde g(t_i)-\min_{1\le i\le {2s}}\tilde g(t_i)\\
&\ge\|\tilde g\|_{[t_1,t_{2s}]}=\|g\|_{[t_1,t_{2s}]}=\|g\|,
\end{align*}
where we used the fact that $\tilde g(t_{2s})=0$.
Hence,
\begin{align}\label{first}
\bigl|[t_0,\dots,t_{{2s}+k};\tilde g]\bigr|&\ge\frac{\|g\|}{(t_{2s}-t_1)^{{2s}-1}(t_{2s}-t_0)\prod_{j={2s}+1}^{{2s}+k}(t_i-t_0)}\\
&=\frac{\|g\|}{(t_{2s}-t_1)^{{2s}-1}(t_{2s}-t_0)k!H^k}.\notag
\end{align}
On the other hand,
\[
\bigl|[t_0,\dots,t_{{2s}+k};\tilde g-g]\bigr|=\left|\sum_{i=1}^k\frac{g(t_{{2s}+i})}{\prod_{j=0, j\ne {2s}+ i}^{{2s}+k}(t_{{2s}+i}-t_j)}\right|,
\]
and we have
\[
\prod_{j=0,j\ne {2s}+ i}^{{2s}+k}|t_{{2s}+i}-t_j|
=H^ki!(k-i)!\prod_{j=1}^{{2s}}(t_{{2s}+i}-t_j)\ge H^ki!(k-i)!(t_{{2s}+1}-t_{2s})^{2s}.
\]
Therefore,
\[
\bigl|[t_0,\dots,t_{{2s}+k};\tilde g-g]\bigr|\le\frac{\|g\|H^{-k}}{(t_{{2s}+1}-t_{2s})^{2s}k!}\sum_{i=1}^k\binom ki=\frac{\|g\|H^{-k}}{(t_{2s+1}-t_{2s})^{2s}k!}(2^k-1).
\]
Thus, combining with (\ref{first}), we obtain
\begin{align*}
\frac{\bigl|[t_0,\dots,t_{{2s}+k};\tilde g-g]\bigr|}{\bigl|[t_0,\dots,t_{{2s}+k};\tilde g]\bigr|}&\le\frac{(t_{2s}-t_1)^{{2s}-1}(t_{2s}-t_0)}{(t_{{2s}+1}-t_{2s})^{2s}}(2^k-1)\\
&\le\frac{(2^k-1)(6sh)^{2s}}{(H-6sh))^{2s}}=\frac1{(2^k-1)^{{2s}-1}}
<\frac12.
\end{align*}
Finally, by virtue of (\ref{first}),
\begin{align}\label{dzsh2}
\bigl|[t_0,\dots,t_{{2s}+k}; g]\bigr|&\ge\bigl|[t_0,\dots,t_{{2s}+k};\tilde g]\bigr|-\bigl|[t_0,\dots,t_{{2s}+k};\tilde g-g]\bigr|\\
&\ge\frac12\bigl|[t_0,\dots,t_{{2s}+k};\tilde g]\bigr|\ge\frac{c\|g\|}{h^{{2s}+k}}=\frac{c\|f-f(t_{2s})\|}{h^{{2s}+k}}.\notag
\end{align}
Now, we combine (\ref{dzsh2}) with (\ref{103}) to obtain (\ref{101}).
\end{proof}
\begin{lemma}\label{4.4} Let $h>0$ and $y_{2s}-y_1\le(6s-2)h$.
If $f\in C^{2s-2}\cap\Delta^{(1)}(Y_s)$, then
\begin{equation}\label{1002}
\|f-f(y_1)\|\le ch^{2s-2}\omega_{2}(f^{(2s-2)},h).
\end{equation}
\end{lemma}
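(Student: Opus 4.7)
My plan is to adapt the divided-difference strategy of Lemma~\ref{4.1}, but to apply $(\ref{ineq})$ with $l=2s-2$ so that the modulus $\omega_2(f^{(2s-2)},h)$ appears in the upper bound in place of $\omega_{k+1}(f^{(2s-1)},h)$. The decisive first observation is that the right-hand side of $(\ref{1002})$ is nondecreasing in $h$, while the left-hand side is independent of $h$, so it is enough to establish the estimate at the smallest admissible value $h=(y_{2s}-y_1)/(6s-2)$. Under this reduction $y_{2s}-y_1=(6s-2)h\ge 4h$, which precludes the pathological configuration in which the two innermost of the chosen knots become too close to exploit $(\ref{ineq})$ effectively.

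Set $g(t):=f(t)-f(y_1)$, so that $g(y_1)=0$. The argument of Lemma~\ref{4.1} applies verbatim to show $\|g\|=\|g\|_{[y_1,y_{2s}]}$: on $[y_0,y_1]$ and on $[y_{2s},y_{2s+1}]$ the function $f$ is nondecreasing and travels between $f(y_{2s})$ and $f(y_1)$, so $|g|$ is majorized there by $|g(y_{2s})|\le\|g\|_{[y_1,y_{2s}]}$.

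Next I would take the knots $t_0:=y_1-h$ and $t_i:=y_i$ for $1\le i\le 2s$, and examine the divided difference $[t_0,t_1,\ldots,t_{2s};g]$ of order $m=2s$. Because $f$ is nondecreasing on $[y_0,y_1]$ one has $g(t_0)\le 0=g(t_1)$, while for $2\le i\le 2s$ the alternation $(-1)^{i-1}(g(t_i)-g(t_{i-1}))=(-1)^{i-1}(f(y_i)-f(y_{i-1}))\ge 0$ holds by the monotonicity of $(-1)^{i-1}f$ on $[y_{i-1},y_i]$; combining these and multiplying through by $(-1)^{m-1}=(-1)^{2s-1}=-1$ yields $(-1)^{m-i}(g(t_i)-g(t_{i-1}))\le 0$ for all $1\le i\le 2s$, so $g$ itself satisfies $(\ref{dbb})$. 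The inclusion of $g(t_1)=0$ among the values forces $\max_i g(t_i)-\min_i g(t_i)\ge\|g\|$, and since $t_{2s}-t_0\le (6s-1)h$, inequality $(\ref{dd})$ produces the lower bound $|[t_0,\ldots,t_{2s};g]|\ge c_1\|g\|/h^{2s}$.

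For the upper bound I would invoke $(\ref{ineq})$ with $l=2s-2$ and $m=2s$, relabeling the knots so that the three critical ones become $x_{2s-2}:=t_0$, $x_{2s-1}:=y_1$, $x_{2s}:=y_{2s}$, with $y_2,\ldots,y_{2s-1}$ filling $x_0,\ldots,x_{2s-3}$. The sum in $(\ref{ineq})$ depends only on the pairwise distances among $x_{2s-2}<x_{2s-1}<x_{2s}$; with consecutive gaps $h$ and $y_{2s}-y_1=(6s-2)h$, it evaluates to $2/(h(y_{2s}-y_1))=2/((6s-2)h^2)$, which yields $|[t_0,\ldots,t_{2s};g]|\le c_2\,\omega_2(f^{(2s-2)},h)/h^2$. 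Combining with the lower bound gives $\|g\|\le(c_2/c_1)\,h^{2s-2}\omega_2(f^{(2s-2)},h)$, which is $(\ref{1002})$. The main subtlety to watch is the preliminary monotonicity-in-$h$ reduction; without it one would have to treat separately the regime $y_{2s}-y_1\ll h$, in which the relabeled knots $x_{2s-1},x_{2s}$ become close and the sum blows up — but that regime is simply excluded once we fix $h$ at its minimum value.
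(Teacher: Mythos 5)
Your argument is correct and follows essentially the same route as the paper: the same knot configuration $y_1-h,y_1,y_2,\dots,y_{2s}$, Lemma~\ref{dl} with $l=2s-2$, $m=2s$ for the upper bound, and Lemma~\ref{m1} for the lower bound. The only real difference is that you normalize $h$ to its minimal admissible value and then apply $(\ref{dd})$ directly to the full order-$2s$ divided difference, whereas the paper skips that reduction and instead peels off one knot via $(\ref{df})$, applying $(\ref{dd})$ to $[y_1,\dots,y_{2s};f]$ so that the factor $(y_{2s}-y_1)^{2s-1}$ cancels the $y_{2s}-y_1$ in the upper bound --- both devices handle the regime $y_{2s}-y_1\ll h$ equally well.
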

\begin{proof}  Note that $f\in\Delta^{(1)}(Y_s)$ implies $\|f-f(y_1)\|=\|f-f(y_1)\|_{[t_1,y_{2s}]}$.

Let $a:=y_1-h$, and set
$
x_i:=y_{i+2},\,0\le i\le 2s-3,\quad x_{2s-2}:=a,\quad x_{2s-1}:=y_1\text{ and } x_{2s}:=y_{2s}.
$
By Lemma \ref{dl} with $l={2s-2}$ and $m=2s$, we obtain
\[
h(y_{2s}-y_1)\bigl|[a,y_1\dots,y_{2s};f]\bigr|\le c\omega_2(f^{({2s-2})},h).
\]
On the other hand, Lemma \ref{m1} yields
\begin{align*}
\bigl|[a,y_1\dots,y_{2s};f]\bigr|&=\frac{\bigl|[y_1\dots,y_{2s};f]\bigr|+\bigl|[a,y_1\dots,y_{2s-1};f]\bigr|}{y_{2s}-a}
\ge\frac{\bigl|[y_1\dots,y_{2s};f]\bigr|}{(6s-1)h}\\
&\ge\frac{\|f-f(y_1)\|_{[y_1,y_{2s}]}}{(y_{2s}-y_1)^{2s-1}(6s-1)h}=c\frac{\|f-f(y_1)\|}{(y_{2s}-y_1)^{2s-1}h}
\end{align*}
Hence,
$$
\|f-f(y_1)\|\le c\frac{(y_{2s}-y_1)^{2s-1}h}{h(y_{2s}-y_1)}\omega_2(f^{({2s-2})},h)\le ch^{2s-2}\omega_2(f^{({2s-2})},h).
$$
This completes the proof.
\end{proof}
Given $t_1<\cdots<t_\nu$, $1\le\nu\le r+1$, denote
\begin{equation}\label{pinu}
\pi_\nu(x)=\prod_{i=1}^\nu(x-t_i).
\end{equation}
\begin{lemma}\label{4.5} Let  $h>0$, $3h\le b-a\le 3(r+1)h$ and $a+h\le t_1<\cdots<t_{r+1}\le b-h$.
If a function $f\in C^r[a.b]$, $r\ge1$, satisfies
$$
f'(x)\pi_{r+1}(x)\ge0,\quad x\in[a,b],
$$
or if $f\in C[a,b]$ changes monotonicity once, at $t_1$, then
\begin{equation}\label{h2}
\|f-f(a)\|_{[a,b]}\le c h^r\omega_2(f^{(r)},h;[a,b]).
\end{equation}
\end{lemma}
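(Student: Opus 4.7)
The plan is to sandwich a single divided difference of $f$ on $r+3$ knots between a lower bound of the form $\|f-f(a)\|_{[a,b]}/h^{r+2}$ coming from Lemma~\ref{m1}, and an upper bound of the form $\omega_2(f^{(r)},h;[a,b])/h^{2}$ coming from Lemma~\ref{dl}. Comparing the two estimates will immediately give (\ref{h2}).

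I will use the $r+3$ knots $\{a,t_1,\dots,t_{r+1},b\}$. Ordered naturally as $s_0:=a<s_1:=t_1<\dots<s_{r+1}:=t_{r+1}<s_{r+2}:=b$, the hypothesis (either $f'\pi_{r+1}\ge 0$ with $f\in C^r$, $r\ge 1$, or a single monotonicity change at $t_1$ when $r=0$) forces $f$ to be monotone on each $[s_{i-1},s_i]$ with alternating direction, so the consecutive differences $f(s_i)-f(s_{i-1})$ alternate in sign. Lemma~\ref{m1} with $m=r+2$ then applies; inequality~(\ref{dd}), together with the observation that $\max_i f(s_i)-\min_i f(s_i)$ equals the full oscillation of $f$ on $[a,b]$ (since $f$ is monotone between consecutive knots) and hence dominates $\|f-f(a)\|_{[a,b]}$, combined with $b-a\le 3(r+1)h$, yields the lower estimate $|[s_0,\dots,s_{r+2};f]|\ge c\|f-f(a)\|_{[a,b]}/h^{r+2}$.

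For the upper estimate I will invoke Lemma~\ref{dl} with $l=r$ and $m=r+2$. The sum in (\ref{ineq}) requires a lower-bounded separation only for the knots that sit at indices $r,r+1,r+2$ in the chosen labeling, so I will relabel the same $r+3$ knots as $x_0,\dots,x_{r+2}$, placing $\{a,t_1,b\}$ into the active positions $\{r,r+1,r+2\}$---these three are pairwise separated by at least $h$, thanks to $a+h\le t_1$, $t_1\le t_{r+1}\le b-h$, and $b-a\ge 3h$---and parking the remaining knots $t_2,\dots,t_{r+1}$ into the dummy positions $0,\dots,r-1$. The resulting sum is then at most $c/h^2$, and the standard modulus dilation $\omega_2(f^{(r)},b-a;[a,b])\le c(r)\,\omega_2(f^{(r)},h;[a,b])$ (valid because $b-a\le 3(r+1)h$) delivers $|[x_0,\dots,x_{r+2};f]|\le c\,\omega_2(f^{(r)},h;[a,b])/h^{2}$. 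Since divided differences are symmetric in their knots, the two labelings produce the same value, so dividing the bounds yields (\ref{h2}).

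The main obstacle is the relabeling trick: the $t_i$'s may be arbitrarily close to one another, so they cannot all be placed in the ``active'' part of Lemma~\ref{dl}; only the controlled knots $a$ and $b$, together with at most one of the $t_i$, can sit there, and the others must be parked in the ``free'' positions where their mutual proximity is harmless. Both cases in the hypothesis are handled uniformly, since only the sign alternation of the consecutive differences $f(s_i)-f(s_{i-1})$ matters for the argument (for $r=0$ this is just the standard alternation across a single change of monotonicity).
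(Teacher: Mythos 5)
Your proposal is correct and follows essentially the same route as the paper: a single divided difference at the $r+3$ knots $\{a,t_1,\dots,t_{r+1},b\}$, bounded below via Lemma~\ref{m1} (sign alternation of consecutive differences) and above via Lemma~\ref{dl} with $l=r$, $m=r+2$, keeping only well-separated knots in the active positions. The sole cosmetic difference is that the paper parks $t_1,\dots,t_r$ and keeps $t_{r+1},a,b$ active, whereas you keep $a,t_1,b$; both choices satisfy the required $h$-separation.
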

\begin{proof} By virtue of Lemma \ref{dl} with $l=r$ and $m=r+2$, setting
\[
x_i:=t_{i+1},\, 0\le i\le r,\quad x_{r+1}:=a\quad\text{and}\quad x_{r+2}:=b,
\]
we readily obtain,
$$
h^2\bigl|[a,t_1,\dots,t_{r+1},b]\bigr|=h^2\bigl|[t_1,\dots,t_{r+1},a,b]\bigr|\le\omega_2(f^{(r)},h;[a,b]).
$$
On the other hand Lemma \ref{m1} implies
$$
(b-a)^{r+2}\bigl|[a,t_1,\dots,t_{r+1},b]\bigr|\ge\|f-f(a)\|_{[a,b]}.
$$
Combining the last two inequalities completes the proof of (\ref{h2}).
\end{proof}
\begin{lemma}\label{4.6} Let $r\ge1$, $h>0$, $3h\le b-a\le 3rh$, and $a+h\le t_1<\cdots<t_r\le b-h$.

If a function $f\in C^r[a,b]$ satisfies
\begin{equation}\label{copos}
f'(x)\pi_{r}(x)\ge0,\quad x\in[a,b],
\end{equation}
then, there is an algebraic polynomial $p=p(\cdot,f;(a,b))$ of degree $\le r+2$, such that $f(a)=p(a)$,
\begin{equation}\label{l138}
\|f-p\|_{[a,b]}\le ch^r\omega_3(f^{(r)},h;[a,b]),
\end{equation}
and
\begin{equation}\label{l139}
p'(x)\pi_{r}(x)\ge0,\quad x\in[a,b],
\end{equation}
\end{lemma}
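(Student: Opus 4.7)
The plan is to build $p$ in two stages. First I use Whitney's inequality to construct an unconstrained Taylor-like polynomial $\tilde p$ of degree $\le r+2$ close to $f$; then I modify $\tilde p$ into a polynomial $p$ of the same degree whose derivative has the factored form $p'=\pi_r Q$ with $Q$ linear and of constant sign on $[a,b]$. The structural fact behind this is that continuity of $f'$ together with $f'\pi_r\ge 0$ and the simple interior zeros $t_i$ of $\pi_r$ force $f'(t_i)=0$ for each $i$; consequently any degree-$\le r+2$ polynomial $p$ with $p'\pi_r\ge 0$ on $[a,b]$ must have $p'(t_i)=0$, so $p'$ factors as $\pi_r Q$ with $\deg Q\le 1$, and the sign condition reduces to $Q\ge 0$ on $[a,b]$ (after a sign flip if necessary).

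For the Whitney stage, apply Whitney's inequality to $f^{(r)}$ on $[a,b]$ to obtain an algebraic polynomial $P$ of degree $\le 2$ with $\|f^{(r)}-P\|_{[a,b]}\le c\,\omega_3(f^{(r)},h;[a,b])$, using $b-a\le 3rh$. Set
\[
\tilde p(x):=\sum_{j=0}^{r-1}\frac{f^{(j)}(a)}{j!}(x-a)^j+\frac{1}{(r-1)!}\int_a^x(x-u)^{r-1}P(u)\,du.
\]
Then $\deg\tilde p\le r+2$, $\tilde p(a)=f(a)$, $\tilde p^{(r)}=P$, and the Taylor remainder formula gives $\|(f-\tilde p)^{(j)}\|_{[a,b]}\le c\,h^{r-j}\omega_3(f^{(r)},h;[a,b])$ for $0\le j\le r$; in particular $|\tilde p'(t_i)|\le c\,h^{r-1}\omega_3(f^{(r)},h;[a,b])$ for every $i$, since $f'(t_i)=0$.

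For the shape-correction stage, I look for $p$ in the form $p(x):=f(a)+\int_a^x\pi_r(u)Q(u)\,du$ with $Q(u)=A+Bu$, so that $\deg p\le r+2$, $p(a)=f(a)$, and $p'=\pi_r Q$ hold automatically. The two parameters $A,B$ are fixed by imposing $p(c_1)=\tilde p(c_1)$ and $p(c_2)=\tilde p(c_2)$ at nodes $c_1<c_2$ chosen inside the largest gap of $\{a,t_1,\ldots,t_r,b\}$. Since there are $r+1$ such gaps summing to $b-a\ge 3h$, the largest gap has length at least $3h/(r+1)$, and on it $|\pi_r|$ enjoys a quantitative lower bound of order $h^r$, so the $2\times 2$ linear system for $(A,B)$ is well conditioned (Cramer's rule yields explicit bounds in terms of $h$). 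Applying \lem{dl} to divided differences of $\tilde p-p$ at the nodes $\{a,c_1,c_2\}\cup\{t_i\}$ then gives $\|\tilde p-p\|_{[a,b]}\le c\,h^r\omega_3(f^{(r)},h;[a,b])$, and the triangle inequality delivers the stated bound on $\|f-p\|$.

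The main obstacle is verifying $Q\ge 0$ on $[a,b]$. My plan is to argue that since $\tilde p'$ is within $c\,h^{r-1}\omega_3(f^{(r)},h;[a,b])$ of $f'=\pi_r\gamma$ with $\gamma\ge 0$ (where $\gamma:=f'/\pi_r$ is extended by continuity off the $t_i$), the matching conditions force $Q$ to approximate $\gamma$ on the chosen gap, hence to be essentially non-negative there. Any residual negative dip of $Q$, whose size our bounds control by $O(h^{-1}\cdot h^{r-1}\omega_3(f^{(r)},h;[a,b]))$, can then be absorbed by adding a controlled non-negative linear correction to $Q$ of the same order; the resulting extra contribution to $\|f-p\|$ is bounded by $c\,h^r\omega_3(f^{(r)},h;[a,b])$, preserving the final estimate while restoring the shape condition.
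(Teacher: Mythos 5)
Your structural observation is the right one --- continuity of $f'$ together with (\ref{copos}) forces $f'(t_i)=0$, so the target polynomial must have $p'=\pi_r Q$ with $Q$ linear and $Q\ge0$ on $[a,b]$ --- but the way you construct $Q$ has a genuine gap, in fact two. First, the $2\times2$ system you get from imposing $p(c_1)=\tilde p(c_1)$, $p(c_2)=\tilde p(c_2)$ has coefficients $\int_a^{c_j}\pi_r(u)\,du$ and $\int_a^{c_j}u\,\pi_r(u)\,du$; a pointwise lower bound $|\pi_r|\gtrsim h^r$ on the chosen gap says nothing about these integrals, which run over all of $[a,c_j]$ where $\pi_r$ changes sign $r$ times and whose moments can cancel or vanish. ``Well conditioned by Cramer's rule'' is asserted, not proved, and is false in general. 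Second, and more fundamentally, even granting a solution $(A,B)$, the bound $\|\tilde p-p\|_{[a,b]}\le ch^r\omega_3(f^{(r)},h;[a,b])$ does not follow: $\tilde p-p$ is a polynomial of degree $\le r+2$ about which you know only that it vanishes at the three points $a,c_1,c_2$; nothing in the construction forces $p(t_i)=\tilde p(t_i)$, so \lem{dl} cannot be applied ``at the nodes $\{a,c_1,c_2\}\cup\{t_i\}$'', and three interpolation conditions cannot control a polynomial with $r+3$ coefficients. Since this estimate is what your nonnegativity argument for $Q$ on the gap rests on, the final ``absorb the dip'' step has nothing quantitative to absorb.

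The fix is to let the zeros $t_i$ do the interpolation for you, which is the paper's route: take $L$ to be the Lagrange polynomial of degree $<r+2$ interpolating $f'$ at the $r+2$ points $a,t_1,\dots,t_r,b$ and set $p(x)=f(a)+\int_a^xL$. Because $f'(t_i)=0$, only the two endpoint Lagrange terms survive, so
$L(x)\pi_r(x)=\frac{f'(a)}{\pi_r(a)}\pi_r^2(x)\frac{x-b}{a-b}+\frac{f'(b)}{\pi_r(b)}\pi_r^2(x)\frac{x-a}{b-a}\ge0$,
i.e.\ $L=\pi_rQ$ with $Q$ exactly the linear interpolant of $f'/\pi_r$ at $a$ and $b$, nonnegative at both endpoints and hence on $[a,b]$ --- no a posteriori correction is needed. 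The error is then controlled directly by the Newton remainder $f'(x)-L(x)=[x,a,t_1,\dots,t_r,b;f']\prod(x-t_i)$ together with \lem{dl} (with $l=r-1$, $m=r+2$), giving $|f'-L|\le ch^{r-1}\omega_3(f^{(r)},h;[a,b])$ and (\ref{l138}) after integration. Your Whitney/Taylor intermediary $\tilde p$ is not needed and is in fact what creates the mismatch at the $t_i$ that your argument cannot repair.
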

\begin{proof}
Let $t_0:=a$,  $t_{r+1}:=b$, and denote by $L$ the Lagrange polynomial of degree $<r+2$ interpolating $f'$ at $t_i$, $0\le i\le r+1$. We apply Lemma \ref{dl} with $l=r-1$, $m=r+2$, setting
\[
x_0:=t_{r-1},\quad x_{r-1}:=x,\quad x_{r+1}:=t_0,\quad x_{r+2}:=t_{r+1}\quad\text{and}\quad x_i:=t_i,\,1\le i\le r,\,i\ne r-1,
\]
and get for $x\in(a,b)$, $x\ne t_i$, $1\le i\le r$,
$$
h^2\bigl|[x,a,t_1\dots,t_{r},b;f']\bigr|\min\{|x-a|,|x-t_r|,|x-b|\}\le  c\omega_3(f^{(r)},h;[a,b]).
$$
Hence,
\[
|f'(x)-L(x)|=\bigl|[x,t_0,\dots,t_{r+1};f']\bigr|\prod_{i=0}^{r+1}(x-t_i)|\le ch^{r-1}\omega_3(f^{(r)},h;[a,b]).
\]
Let
\[
p(x):=f(a)+\int_a^xL(t)\,dt.
\]
Then (\ref{l138}) is evident, and since $L(t_i)=f'(t_i)=0,\quad 1\le i\le r$,
$$
p'(x)\pi_r(x)=L(x)\pi_{r}(x)=\frac{f'(a)}{\pi(a)}\pi_{r}^2(x)\frac{x-b}{a-b}+\frac{f'(b)}{\pi(b)}\pi_{r}^2(x)\frac{x-a}{b-a}\ge0,\quad x\in[a,b],
$$
which proves (\ref{l139}).
\end{proof}
\begin{lemma}\label{4.7} Let $r\ge2$, $1\le\nu\le r-1$, $h>0$, $3h\le b-a\le 3\nu h$ and $a+h\le t_1<\cdots<t_\nu\le b-h$.

If a function $f\in C^r[a,b]$ satisfies
\begin{equation}\label{l127}
f'(x)\pi_\nu(x)\ge0,\quad x\in[a,b],
\end{equation}
then there is an algebraic polynomial $p=p(\cdot,f;(a,b))$ of degree $<r+k$, such that $p(a)=f(a)$,
\begin{equation}\label{l128}
\|f-p\|_{[a,b]}\le ch^{r}\omega_k(f^{(r)},h;[a,b]),
\end{equation}
and
\begin{equation}\label{l129}
p'(x)\pi_\nu(x)\ge0,\quad x\in[a,b].
\end{equation}
\end{lemma}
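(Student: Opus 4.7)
The plan is to generalize the construction of Lemma~\ref{4.6}, namely $p(x) = f(a) + \int_a^x L(t)\,dt$ for a Lagrange-type polynomial $L$ that approximates $f'$, but now equipped with more interpolation nodes (to exploit the higher-order modulus $\omega_k$) and factored through $\pi_\nu$ (to encode the weaker sign constraint).

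First, since $r\ge 2$ and $f\in C^r$, the derivative $f'\in C^{r-1}$ is continuous; combined with $f'\pi_\nu\ge 0$ and the fact that $\pi_\nu$ changes sign at each $t_i$, this forces $f'(t_i)=0$ for $1\le i\le \nu$. Set $t_0:=a$ and $t_{\nu+1}:=b$. Since $3h \le b-a \le 3\nu h \le 3(r-1)h$, I can adjoin $r+k-3-\nu$ additional nodes $\tau_1,\dots,\tau_{r+k-3-\nu}$ so that the union $T$ consists of $r+k-1$ distinct points pairwise separated by at least $c_0 h$ (with $c_0$ depending only on $r,s,k$). Let $L$ be the Lagrange polynomial of degree $r+k-2$ interpolating $f'$ at $T$. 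Applying Lemma~\ref{dl} with $l=r-1$ and $m=r+k-1$ to the divided-difference remainder $f'(x)-L(x)=[x,T;f']\prod_{y\in T}(x-y)$ yields, after using $b-a\le 3(r-1)h$, the estimate $\|f'-L\|_{[a,b]} \le c\,h^{r-1}\omega_k(f^{(r)},h;[a,b])$, exactly as in the proof of Lemma~\ref{4.6}.

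Because $L$ interpolates $f'$ at each $t_i$ and $f'(t_i)=0$, we may factor $L(x)=\pi_\nu(x)M(x)$ with $\deg M \le r+k-2-\nu$, so that the desired sign condition~\eqref{l129} reduces to $M\ge0$ on $[a,b]$. This is the main obstacle: even though the values $f'(y)/\pi_\nu(y)\ge 0$ at the interpolation nodes $y\in T\setminus\{t_1,\dots,t_\nu\}$ are nonnegative, Lagrange interpolation does not in general preserve nonnegativity, so the raw $M$ may dip below zero. The remedy is to replace this naive Lagrange construction with a nonnegative polynomial approximation of the function $G(x):=f'(x)/\pi_\nu(x)$. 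Observe that $G$ extends to a nonnegative function on all of $[a,b]$ whose smoothness at each $t_i$ is inherited from $f$ via $f'(x)=(x-t_i)\,[t_i,x;f']$, with moduli of smoothness controlled in terms of $\omega_k(f^{(r)},\cdot;[a,b])$. A quantitative nonnegative-approximation theorem then yields a polynomial $\tilde M$ of degree $\le r+k-2-\nu$ with $\tilde M\ge 0$ on $[a,b]$ and
\[
\|G-\tilde M\|_{[a,b]} \le c\,h^{r-\nu-1}\omega_k(f^{(r)},h;[a,b]).
\]

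With such $\tilde M$ in hand, I define $p(x):=f(a)+\int_a^x \pi_\nu(t)\tilde M(t)\,dt$. Then $\deg p\le r+k-1<r+k$, $p(a)=f(a)$, and $p'\pi_\nu=\pi_\nu^2\tilde M\ge 0$, giving \eqref{l129}. The estimate \eqref{l128} follows from
\[
|f(x)-p(x)| \le (b-a)\,\|f'-\pi_\nu\tilde M\|_{[a,b]} \le (b-a)^{\nu+1}\,\|G-\tilde M\|_{[a,b]} \le c\,h^r\omega_k(f^{(r)},h;[a,b]),
\]
since $b-a\le 3(r-1)h$. The hard part, as noted, is producing the nonnegative polynomial $\tilde M$ with the stated error bound—equivalently, tracking how the smoothness of $f^{(r)}$ passes cleanly through division by $\pi_\nu$ and then through a positive-approximation step with a quantitative constant depending only on $r,s,k$.
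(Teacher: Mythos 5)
There is a genuine gap at the heart of your argument. You correctly set up the construction (Lagrange interpolation of $f'$ at $a$, $t_1,\dots,t_\nu$ and extra nodes, then integrate), and you correctly diagnose that the only real difficulty is the sign condition \eqref{l129}. But your resolution of that difficulty is an unproven black box: you invoke ``a quantitative nonnegative-approximation theorem'' for the quotient $G=f'/\pi_\nu$, producing $\tilde M\ge0$ of the \emph{fixed} degree $\le r+k-2-\nu$ with $\|G-\tilde M\|_{[a,b]}\le ch^{r-\nu-1}\omega_k(f^{(r)},h;[a,b])$. No such theorem is quoted or proved, and the required input --- that the smoothness of $G$ near the zeros $t_i$ is controlled well enough that a polynomial of that fixed degree achieves this bound --- is exactly the content of the lemma; you acknowledge it is ``the hard part'' and then stop. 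As written, the proof is incomplete precisely where it needs to be complete.

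The paper avoids the quotient entirely by a one-line perturbation that you were one step away from. Since $t_1,\dots,t_\nu$ are among the interpolation nodes, the Lagrange remainder $f'(x)-L(x)=[x,t_0,\dots,t_{r+k-2};f']\prod_{i}(x-t_i)$ automatically carries the factor $\pi_\nu(x)$; combined with Lemma~\ref{dl} and $b-a\le 3\nu h$ this gives the \emph{weighted pointwise} bound $|f'(x)-L(x)|\le c_0h^{r-\nu-1}\omega\,|\pi_\nu(x)|$ with $\omega:=\omega_k(f^{(r)},h;[a,b])$, not merely the uniform bound $ch^{r-1}\omega$ you record. One then sets $p(x):=f(a)+\int_a^x\bigl(L(t)+c_0h^{r-\nu-1}\omega\,\pi_\nu(t)\bigr)\,dt$: the added term makes $p'\pi_\nu\ge f'\pi_\nu-|f'-L|\,|\pi_\nu|+c_0h^{r-\nu-1}\omega\,\pi_\nu^2\ge0$, while contributing only $O(h^{r}\omega)$ to \eqref{l128} after integration. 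Note that in your own notation this is nothing but $\tilde M=M+c_0h^{r-\nu-1}\omega$, i.e.\ the elementary ``shift the approximant up by its sup-error'' device for positivity applied to $M=L/\pi_\nu$ --- no smoothness analysis of $f'/\pi_\nu$ and no external positive-approximation theorem are needed. If you replace your appeal to the unnamed theorem by the weighted remainder estimate and this constant shift, your proof closes and coincides with the paper's.
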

\begin{proof}
Denote $\omega:=\omega_k(f^{(r)},h,[a,b])$. Let $t_0:=a$ and $t_{\nu+i}:=b-h+\frac{ih}{r+k-\nu-2},\, 1\le i\le r+k-\nu-2$.
We apply Lemma \ref{dl} with $l=r-1$, $m=r+k-1$, setting
\[
x_0:=x,\quad x_m=a,\quad\text{and}\quad x_i=t_i, 1\le i\le m-1,
\]
and get
$$
h^k\bigl|[x,t_0,\dots,t_{r+k-2};f']\bigr|=h^k\bigl|[x,t_1,\dots,t_{r+k-2},a;f']\bigr|\le c\omega.
$$
Denote by $L$ the Lagrange polynomial of degree $<r+k-1$, that interpolates $f'$ at $t_i$, $0\le i\le r+k-2$. Then, for $x\in[a,b]$, $x\ne t_i$, $\le i\le r+k-2$,
\[
f'(x)-L(x)=[x,t_1,\dots,t_{r+k-2};f']\prod_{i=0}^{r+k-2}(x-t_i),
\]
whence
$$
|f'(x)-L(x)|\le c_0h^{r-\nu-1}|\pi_\nu(x)|\omega,\quad x\in[a,b].
$$
Now, the desired polynomial may be taken in the form
$$
p(x):=f(a)+\int_a^x\left(L(t)+c_0h^{r-1-\nu}\omega\pi_\nu(t)\right)\,dt.
$$
Evidently, (\ref{l128}) is valid, and by (\ref{l127})
\begin{align*}
p'(x)\pi_\nu(x)&=f'(x)\pi_\nu(x)+(L(x)-f'(x))\pi_\nu(x)+c_0h^{r-1-\nu}\omega\pi_\nu^2(x)\\
&\ge -|(L(x)-f'(x))\pi_\nu(x)|+c_0h^{r-\nu-1}\omega\pi_\nu^2(x)\\&\ge -c_0h^{r-\nu-1}\omega\pi_\nu^2(x)+c_0h^{r-\nu-1}\omega\pi_\nu^2(x)=0,
\end{align*}
which proves (\ref{l129}).
\end{proof}
Finally, we need the next result.
\begin{lemma}\label{mon} Let $r\ge1$, and assume that $f\in C^r[a,b]$ is such that $f'(x)\ge0$, $x\in[a,b]$. Then, there exists a polynomial $P=P(\cdot,f;(a,b))$ of degree $<r+k$ such that
\begin{equation}\label{ls3}
\|f-P\|\le c(b-a)^r\omega_k(f^{(r)},b-a;[a,b]),
\end{equation}
\begin{equation}\label{ls2}
P(a)=f(a),\quad P(b)=f(b),
\end{equation}
and
\begin{equation}\label{ls1}
P'(x)\ge0,\quad x\in[a,b].
\end{equation}
\end{lemma}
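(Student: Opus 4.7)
The plan is to produce $P$ in three moves: approximate $f'$ by a polynomial, shift it upward so it remains nonnegative on $[a,b]$, integrate to obtain a polynomial matching $f(a)$, and finally apply a multiplicative rescaling to correct the value at $b$ without destroying monotonicity.

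More concretely, I would begin with the standard Jackson--Whitney estimate applied to $f'\in C^{r-1}[a,b]$: there exists a polynomial $L$ of degree $<r+k-1$ with $\|f'-L\|_{[a,b]}\le c_1(b-a)^{r-1}\omega$, where $\omega:=\omega_k(f^{(r)},b-a;[a,b])$. Setting $M:=c_1(b-a)^{r-1}\omega$ and $L^*(x):=L(x)+M$ yields $L^*(x)\ge f'(x)\ge 0$, so
\[
R(x):=\int_a^x L^*(t)\,dt
\]
is a nondecreasing polynomial of degree $\le r+k-1$ with $R(a)=0$ and, crucially, $R(b)\ge\int_a^b f'(t)\,dt=f(b)-f(a)$.

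I then define
\[
P(x):=f(a)+\lambda R(x),\qquad \lambda:=\frac{f(b)-f(a)}{R(b)},
\]
under the nondegenerate assumption $f(b)>f(a)$ (the case $f\equiv\text{const}$ on $[a,b]$ is trivial, and $\omega=0$ forces $f$ itself to be a polynomial of degree $<r+k$). The inequality $R(b)\ge f(b)-f(a)>0$ ensures $\lambda\in(0,1]$, which is the crux: it gives simultaneously $P'(x)=\lambda L^*(x)\ge 0$, the degree bound, and both interpolation conditions \eqref{ls2}. For the error \eqref{ls3}, writing $R(x)=(f(x)-f(a))+E(x)$ with $0\le E(x)=\int_a^x(L^*-f')\,dt\le 2M(x-a)$, a short computation gives $P(x)-f(x)=-(1-\lambda)(f(x)-f(a))+\lambda E(x)$, and each summand is $O(M(b-a))=O((b-a)^r\omega)$.

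The construction is essentially forced; the only real obstacle is conceptual, namely reconciling both endpoint interpolation conditions with the sign constraint $P'\ge 0$. The rescaling by $\lambda$ resolves this because the shift by $M$---introduced to make $L^*$ nonnegative---also forces $R(b)$ to overshoot $f(b)-f(a)$, leaving room for a pointwise contraction $\lambda\le 1$ that scales down $R$ without changing its sign or destroying the anchor at $a$.
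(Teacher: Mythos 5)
Your proposal is correct and follows essentially the same route as the paper: a Whitney/Jackson approximation $L$ of $f'$, shifted up by its error bound to stay nonnegative, integrated from $a$, and then contracted by the factor $\lambda=\bigl(f(b)-f(a)\bigr)/R(b)\le 1$ to hit $f(b)$ (the paper writes this contraction as $f(a)+(f(b)-f(a))\frac{\tilde P(x)-\tilde P(a)}{\tilde P(b)-\tilde P(a)}$, which is the same map). Your error decomposition $P-f=-(1-\lambda)(f-f(a))+\lambda E$ is a minor variant of the paper's identity and yields the same bound $c(b-a)^r\omega_k(f^{(r)},b-a;[a,b])$.
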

\begin{proof} Set $\omega:=\omega_k(f^{(r)},b-a;[a,b])$. Whitney's inequality implies that the best algebraic polynomial $p$ of degree $<r+k-1$, approximating $f'$, satisfies
$$
\|f'-p\|_{[a,b]}\le c_1h^{r-1}\omega.
$$
Hence, the polynomial
$$
\tilde P(x):=f(a)+\int_a^x(p(t)+c_1h^{r-1}\omega)\,dt
$$
of degree $<r+k$, satisfies (\ref{ls3}), (\ref{ls1}) and $\tilde P(a)=f(a)$.

Now, let
$$
P(x):=f(a)+\left(f(b)-f(a)\right)\frac{\tilde P(x)-\tilde P(a)}{\tilde P(b)-\tilde P(a)}.
$$
Evidently, (\ref{ls2}) and (\ref{ls1}) are satisfied, and since
$$
P(x)-f(x)\equiv\left(\tilde P(x)-f(x)\right)+\left(f(b)-\tilde P(b)\right)\frac{\tilde P(x)-\tilde P(a)}{\tilde P(b)-\tilde P(a)},
$$
we get
$$
\|P-f\|_{[a,b]}\le \|\tilde P-f\|_{[a,b]} + |f(b)-\tilde P(b)|\le 2\|\tilde P-f\|_{[a,b]}\le ch^r\omega,
$$
which is  (\ref{ls3}).
\end{proof}

\sect{Proofs of the positive results}
Since $f$ is $2\pi$-periodic, Whitney's inequality implies
$$
\|f-f(0)\|\le c\omega_k(f,2\pi),
$$
that yields, for any $N$ and all $n\le N$, and $f\in\Delta^{(1)}(Y_s)$,
\begin{equation}\label{51}
E_n(f,Y_s)\le E_1(f,Y_s)\le\|f-f(0)\|\le c(N)\omega_m(f,1/n).
\end{equation}
So, in the sequel, we assume that $n>6s$.

Let $x_j:=\frac{j\pi}n$, $I_j:=[x_j,x_{j+1}],\quad j\in\mathbb Z$, $h:=\frac\pi n$, and for each $i\in\mathbb Z$, let $j_i$ be the index, such that $x_{j_i}\le y_i<x_{j_i+1}$. Denote by $O$
the interior of the union
$$
\cup_{i\in\mathbb Z}[x_{j_i-1},x_{j_i+2}],
$$
and let $O_\mu=(x_{\mu^-},x_{\mu^+})$, $\mu\in\mathbb Z$, be the connected components of $O$, enumerated from right to left. Note that $x_{\mu^+}-x_{\mu^-}\le6sh$ for all $\mu\in\mathbb Z$.

Denote $I:=[-\pi,\pi]$. Since $f$ is $2\pi$-periodic and $n>6s$, we may without loss of generality, assume that $\pi\notin O$, so that there is no $O_\mu$ such that $\pm\pi\in O_\mu$. We may also assume, by shifting the indices if needed, that $y_i\in I$, $1\le i\le2s$.

There are two possibilities for the location of the extremum points. Either there is only one component of $O$ in a period $I$, or there are at least two. In the former case, that is, when each $O_\mu$ contains $2s$ points $y_i$, so that Lemma \ref{4.4} and Lemma \ref{4.1}, imply that the polynomial $T_n(x)\equiv f(0)$ provides the validity of the estimates for all appropriate $n$'s, in Theorem \ref{T1}, and Theorems \ref{T2} and \ref{T3}, respectively.

Thus, we are left with the case where there are at least two components of $O$ in $I$.

Under the assumption that $\pi\notin O$, denote by $\Sigma_{n,m}(Y_s)$ the set of continuous piecewise polynomials $S$, on $I$, such that
$$
S\left|_{I_j}\right.=P_j,\quad \text{if}\quad I_j\subset I\setminus O,
$$
and
$$
S\left|_{O_\mu}\right.=p_\mu,\quad O_\mu\subset I,
$$
where $P_j$ and $p_\mu$ are algebraic polynomials of degree $<m$.

In the sequel, by writing $S'(x)$, we implicitly assume that $x\ne x_j$, $x_j\notin O$.

We need the following result \cite{LSS}*{Corollary 4.4}.
\begin{lemma}\label{lss} Let $s\ge1$ and $m\ge 2$. There is a constant $\tilde c=c(s,m)$, such that for $n>\tilde c$, if $f\in C\cap\Delta^{(1)}(Y_s)$, $S\in\Sigma_{n,m}(Y_s)$ and $S'(x)\Pi(x)\ge0$, $x\in I$, where $\Pi(x)$ was defined in $(\ref{Pi})$, then
$$
E_{\tilde cn}(f,Y_s)\le\tilde c(\omega_m(f,1/n)+\|f-S\|_I).
$$
\end{lemma}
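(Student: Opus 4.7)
\medskip\noindent\textbf{Proof proposal for Lemma \ref{lss}.} My plan is to produce a comonotone trigonometric polynomial close to $S$ (hence close to $f$) in two stages: an unconstrained trigonometric approximation of $S$, followed by a shape-correcting perturbation.

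First, I would apply to $S$ a Jackson-type linear operator $J_n$ of degree $c_1 n$ built from a non-negative trigonometric kernel that reproduces trigonometric polynomials of degree $<m$. Using Whitney's inequality on each piece $I_j$ or $O_\mu$ of $S$, together with the continuity of $S$ across breakpoints, one obtains
$$
\|S-J_n(S)\|_I\le c\,\omega_m(S,1/n)\le c\bigl(\omega_m(f,1/n)+\|f-S\|_I\bigr),
$$
the last step by subadditivity of $\omega_m$. Hence $J_n(S)$ already realises the desired error order against $f$; the only remaining task is to enforce the sign condition $T_n'(x)\Pi(x)\ge0$.

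Second, since $S'\Pi\ge 0$ while $(J_n(S))'-S'$ is only small in an averaged sense, $(J_n(S))'\Pi$ can become slightly negative near the $y_i$. I would correct this by adding a trigonometric polynomial $\Phi$ of degree $c_2n$ with $\|\Phi\|_I$ bounded independently of $n$ and with $\Phi'\Pi$ bounded below by a suitable positive function on $I$ (roughly, by $1$ on $I\setminus O$ and by $c\Pi^2$ on $O$). Setting $T_n:=J_n(S)+\lambda\Phi$ with $\lambda$ of order $\omega_m(f,1/n)+\|f-S\|_I$ then yields a comonotone trigonometric polynomial of degree $\tilde cn$ with $\|f-T_n\|_I\le\tilde c(\omega_m(f,1/n)+\|f-S\|_I)$.

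The main obstacle is the explicit construction of $\Phi$. Since $\Pi$ changes sign exactly $2s$ times and has mean zero, the natural choice $\Phi'=\Pi\cdot Q_n$ with $Q_n\ge0$ a Jackson kernel concentrated near the $y_i$ forces an extra linear constraint $\int_{-\pi}^{\pi}\Pi(t)Q_n(t)\,dt=0$ to keep $\Phi$ periodic. I would satisfy this constraint by a small non-negative perturbation of $Q_n$ localised away from $O$, then verify via Bernstein-type inequalities that $\|\Phi\|_I$ stays bounded and that $\Phi'\Pi$ dominates the Jackson error $|(J_n(S))'-S'|$ on the appropriate scale. The delicate point is near the $y_i$, where $\Pi$ is small but so is the sign-violation of $(J_n(S))'$ versus $S'$: exploiting the matched vanishing rates of both quantities at each $y_i$ should show that $\lambda\Phi$ dominates the error, giving $T_n\in\Delta^{(1)}(Y_s)$.
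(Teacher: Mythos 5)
First, a point of reference: the paper offers no proof of Lemma \ref{lss}; it is quoted from \cite{LSS}*{Corollary 4.4} (the only added remark concerns extending $S$ from $I$ to $\mathbb R$). So your proposal is competing with the proof in \cite{LSS}, which is a genuinely local construction (the pieces $P_j$ and $p_\mu$ are glued by trigonometric partition-of-unity-type kernels with pointwise derivative control), not a global ``smooth approximation plus additive corrector'' scheme. That difference is not cosmetic: your scheme, as described, cannot work, for the following concrete reasons.

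The decisive obstruction is at the sign-change points. Since $\Pi$ changes sign at each $y_i$ and $T_n'$ is continuous, $T_n'\Pi\ge0$ forces $T_n'(y_i)=0$ exactly. In your construction $\Phi'=\Pi\,Q_n$ vanishes at every $y_i$, so $T_n'(y_i)=(J_n(S))'(y_i)$, and a Jackson-type operator gives no reason for $(J_n(S))'$ to vanish at prescribed points; generically $T_n'(y_i)\ne0$ and $T_n\notin\Delta^{(1)}(Y_s)$ no matter how $\lambda$ is chosen. Second, the corrector you postulate does not exist: $\Phi'\Pi\ge1$ on $I\setminus O$ forces $|\Phi'(x)|\ge1/|\Pi(x)|\ge2/|x-y_i|$, and $\Phi'$ cannot change sign between $y_i$ and the midpoint of $[y_i,y_{i+1}]$ (its sign is pinned to that of $\Pi$), so integrating from distance $\sim1/n$ to distance $\sim1$ from $y_i$ gives $\|\Phi\|_I\ge c\log n$, contradicting the claimed uniform bound (and already a $\log n$ loss would ruin the Jackson estimate). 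Third, the quantity to be dominated is mis-sized: $S'$ has jumps at the knots $x_j$ of order $n\bigl(\omega_m(f,1/n)+\|f-S\|_I\bigr)$, so $|(J_n(S))'-S'|$ is of that order near the knots; with $\lambda\sim\omega_m(f,1/n)+\|f-S\|_I$ you would need $|\Phi'|\gtrsim n$ on $I\setminus O$, and since $\Phi'$ keeps a constant sign on each interval $[y_i,y_{i+1}]$ of length $\asymp1$ this gives $\|\Phi\|_I\gtrsim n$. (A smaller issue: a positive kernel cannot reproduce the order $\omega_m$ for $m\ge3$ by the saturation theorem for positive linear operators; one must use non-positive Jackson--Stechkin means, which is fixable but shows the first stage also needs care.) These are the standard reasons why comonotone approximation is not obtained by globally perturbing an unconstrained approximant, and why \cite{LSS} instead builds the polynomial locally so that the derivative error is majorized by a function that itself vanishes at the $y_i$ at the right rate.
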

\begin{remark} In Lemma $\ref{lss}$, $S\in\Sigma_{n,m}(Y_s)$ is defined on $I$, while in \cite{LSS}*{Corollary 4.4}, $S\in\overline\Sigma_{n,m}(Y_s)$ is defined on $\mathbb R$. But, there is no difficulty in extending $S\in\Sigma_{n,m}(Y_s)$ to an element $S\in\overline\Sigma_{n,m}(Y_s)$.
\end{remark}
We point out that in view of (\ref{51}), we will only have to prove the results for $n>(\tilde c(s,m)+6s)^2=:c_2$. To this end, for all $n>6s$, we construct an appropriate $S$ as follows.

So, we assume that there are at least two components in a period, so that in each component $O_\mu$ there are $1\le\nu\le2s-1\le r+1$ extremal points.

In each interval $I_j\subset I\setminus O$, either $f$ or $-f$ is monotone nondecreasing, so for $r\ge1$, we apply Lemma \ref{mon} to obtain a polynomial $\tilde P_j$, of the same monotonicity, interpolating $f$ at both endpoints of the interval, and such that,
\begin{equation}\label{ls4}
\|f-\tilde P_j\|_{I_j}\le\frac c{n^r}\omega_k(f^{(r)},1/n;I_j).
\end{equation}
For $r=0$ and $k=2$, (\ref{ls4}) follows by taking $\tilde P_j$ to be the linear, interpolating $f$ at both endpoints of $I_j$.

In the proof below we apply the various lemmas of Section 4, where we always take a generic component $O_\mu\subset I$ and $y_{l+i}\in O_\mu$, $1\le i\le\nu<2s$, and we set $y_{l+i}=t_i$, $1\le i\le\nu$. Note that $y_{l+1}$ may be either a maximum of $f$ or of $-f$, and the lemmas are applied accordingly.

By Lemmas \ref{4.5} through \ref{4.7}, for $r=2s-2$, by Lemmas \ref{4.6} and \ref{4.7}, for $r=2s-1$, and by Lemma \ref{4.7}, for $r=2s$, there is a polynomial $\tilde p_\mu$ of degree $<r+2$, $<r+3$ and $<r+k$, $k\ge3$, respectively, comonotone with $f$ on $O_\mu$, such that $\tilde p(x_{\mu^-})=f(x_{\mu^-})$ and, respectively,
\begin{equation}\label{pr}
\|f-\tilde p_\mu\|_{\overline O_\mu}\le\frac c{n^r}\omega_k(f^{(r)},1/n).
\end{equation}
Thus, in all cases, the piecewise polynomial
\[
\tilde S(x):=\begin{cases} \tilde p_\mu(x),&\quad x\in O_\mu\subset I,\\
\tilde P_j(x),&x\in I_j:\,I_j\subset I\setminus O,
\end{cases}
\]
of the respective degrees, is continuous in each interval $(x_{\mu^+},x_{(\mu-1)^-})$, $\mu:O_\mu\subset I$, and by (\ref{ls4}) and (\ref{pr}),  it satisfies, for the respective $k$,
\[
\tilde S'(x)\Pi(x)\ge0,\quad x\in I,\text{ and }\sup_{x\in I}|f(x)-\tilde S(x)|\le\frac{c_3}{n^r}\omega_k(f^{(r)},1/n).
\]
The continuous piecewise polynomial,
$$
S(x):=f(-\pi)+\int_{-\pi}^x\tilde S'(t)\,dt,\quad x\in I,
$$
is in $\Sigma_{n,r+k}(Y_s)$, satisfies $S'(x)\Pi(x)\ge0$, $x\in I$, and since there are at most $2s$ components $O_\mu$, that is, at most $2s$ discontinuities of $\tilde S$, in a period, we conclude that for the respective $k$,
\[
\quad\|f-S\|_I\le2s\frac{c_3}{n^r}\omega_k(f^{(r)},1/n).
\]
Hence, by Lemma \ref{lss}, we obtain
$$
E_{n_1}(f,Y_s)\le \frac c{n_1^r}\omega_k(f^{(r)},1/n_1),\quad n_1>c_2.
$$
This completes the proof.$\qquad\qquad\qquad\qquad\qquad\qquad\qquad\qquad\qquad\qquad\qquad\qquad\quad\qed$

\sect{Counterexamples}

In the proofs below, $G$ is an infinitely differentiable function on $\mathbb R$, satisfying $xG'(x)\ge0$, $x\in\mathbb R$, and
$$
G(x)=\begin{cases}
1,\quad&|x|\ge2,\\
0,\quad&|x|\le1.
\end{cases}
$$
In addition, we assume that $n>c_*$ for some $c_*>0$.

We observe that for $1\le n\le c_*$, $E_n^{(1)}(f,Y_s)\ge E_{[c_*+1]}^{(1)}(f,Y_s)$, and we
apply the case $n=[c_*+1]$, to obtain the results for $1\le n\le c_*$.

We begin with the proof of the negative result involving all $r$, $0\le r<2s-2$, and then we deal with the negative results for the additional, specific $r$'s.

\begin{proof}[Proof of Theorem $\ref{L51}$]
Let $0\le r<2s-2$ and set
$$
\tau(x):=\begin{cases}
2^{r+1}\sin^{r+1}\frac x2,\quad&\text{if\, $r$\, is odd},\\
-\sin^{r+1} x,\quad&\text{if\, $r$\, is even}.
\end{cases}
$$
Note that $|\tau^{(r+1)}(0)|=(r+1))!$. Hence, there is a $0<c_4<1$, such that
\begin{equation}\label{l512}
|\tau^{(r+1)}(x)|\ge\frac12,\quad|x|\le c_4.
\end{equation}
Also, note that for $0<b\le c_4/2$, $\|\tau^{(r)}\|_{[-2b,2b]}\le cb$.

Thus, applying Taylor's expansion, we obtain
\begin{equation}\label{j1j}
\|\tau^{(j)}\|_{[-2b,2b]}\le\frac{(2b)^{r-j}}{(r-j)!}\|\tau^{(r)}\|_{[-2b,2b]}\le cb^{r+1-j},\quad 0\le j\le r.
\end{equation}
Denote by $F$ the $2\pi$-periodic function, such that
$$
F(x):=G\left(\frac xb\right)\tau(x),\quad x\in[-\pi,\pi],
$$
and note that
\begin{equation}\label{l513}
\|F-\tau\|\le(2b)^{r+1}\quad\text{and}\quad\|F^{(r)}-\tau^{(r)}\|\le cb.
\end{equation}
Indeed, by (\ref{j1j}),
\begin{align*}
\|F^{(r)}-\tau^{(r)}\|&=\|F^{(r)}-\tau^{(r)}\|_{[-2b,2b]}\le\|\tau^{(r)}\|_{[-2b,2b]}+\|F^{(r)}\|_{[-2b,2b]}\\
&\le cb+2^r\max_{0\le j<r}\frac1{b^{r-j}}\|\tau^{(j)}\|_{[-2b,2b]}\|G^{(r-j)}\|<cb.
\end{align*}
Hence,
\begin{align}\label{l514}
\omega_2(F^{(r)},t)&\le\omega_2(\tau^{(r)},t)+\omega_2(F^{(r)}-\tau^{(r)},t)\\
&\le t^2\|\tau^{(r+2)}\|+2^2\|F^{(r)}-\tau^{(r)}\|\le c_5t^2+c_6b.\notag
\end{align}
Define the collection $Y_s$ as follows.

If $r$ is even, then we let $y_1=\frac{-\pi}2$, $y_{2s}=\frac\pi2$ and $y_i\in[-b,b]$, $2\le i\le 2s-1$, and we observe that $F(y_1)=1$, is the single maximum in $[-\pi,-b]$, and $F(y_{2s})=-1$, is the single  minimum in $[b,\pi]$.

If $r$ is odd, then we let $y_1=-\pi$ and $y_i\in[-b,b]$, $2\le i\le 2s$, and we observe that $F(y_1)=2^{r+1}$, the single maximum in $[-\pi,\pi)$, outside $[-b,b]$.

Since $F(x)=0$, $x\in[-b,b]$, in both cases we may consider that $F\in\Delta^{(1)}(Y_s)$.

In both cases there are at least $r+1$ points $y_i\in [-b,b]$, and for every trigonometric polynomial $T_n\in\Delta^{(1)}(Y_s)$, of degree $<n$, we have $T'_n(y_i)=0$. It follows that there is a point $\theta\in(-b,b)$, such that $T_n^{(r+1)}(\theta)=0$. Hence, if $n>r+1$,  then (\ref{l512}) and (\ref{l513}), imply
\begin{align*}
\frac12&\le\left|\tau^{(r+1)}(\theta)\right|=\left|\tau^{(r+1)}(\theta)-T_n^{(r+1)}(\theta)\right|\\
&\le n^{r+1}\|\tau-T_n\|\le n^{r+1}\|F-T_n\|+n^{r+1}\|\tau-F\|\\
&\le n^{r+1}\|F-T_n\|+(2b)^{r+1}n^{r+1}<n^{r+1}\|F-T_n\|+\frac14.
\end{align*}
provided $b<1/(8n)$, and we actually take $b=n^{-2}$ for $n>c_*:=\max\{(2/c_4)^{1/2},r+1,8\}$. Then, for $n>c_*$, we obtain
$$
n^r\|F-T_n\|>\frac1{4n}=\frac n{4(c_5+c_6)}\left(\frac{c_5}{n^2}+c_6b\right)\ge cn\omega_2\left(F^{(r)},\frac1n\right).
$$
This completes the proof.
\end{proof}
\begin{proof}[Proof of Theorem $\ref{L52}$]
Let $r=2s-1$ and $0<b<\pi/2$, and denote
$$
\tau(x):=(\cos b-\cos x)\sin^r x.
$$
Note that
\begin{align}\label{om}
&\omega_4(\tau^{(r-1)},t)\le t^4\|\tau^{(r+3)}\|\le c_7t^4,\\
&\tau^{(j)}(0)=0,\quad0\le j<r\quad\text{and}\quad\tau^{(r)}(0)=r!(\cos b-1).\notag
\end{align}
Since $1-\cos b>b^2/4$, there is a $0<\delta<b$, such that
\begin{equation}\label{l152}
-\tau^{(r)}(x)>\frac{b^2}4>0,\quad x\in[-\delta,\delta].
\end{equation}
In addition,
$$
\tau^{(r)}(x)=\sin^2x\,\alpha(x)+(\sin^rx)^{(r)}(\cos b-\cos x),
$$
where $\alpha$ is a trigonometric polynomial such that $\|\alpha\|<c$, and $\|(\sin^r \cdot)^{(r)}\|<c$.

Thus, we conclude that $\|\tau^{(r)}\|_{[-2b,2b]}\le cb^2$.

Again, by Taylor's expansion, we obtain
\begin{equation}\label{j1}
\|\tau^{(j)}\|_{[-2b,2b]}\le\frac{(2b)^{r-j}}{(r-j)!}\|\tau^{(r)}\|_{[-2b,2b]}\le cb^{r+2-j},\quad 0\le j\le r.
\end{equation}
Denote by $f$ the $2\pi$-periodic function, such that
$$
f(x):=G\left(\frac xb\right)\tau(x),\quad x\in[-\pi,\pi],
$$
and note that
\begin{equation}\label{l53}
\|f-\tau\|=\|f-\tau\|_{[-2b,2b]}\le\|\tau\|_{[-2b,2b]}\le(2b)^{r+2}.
\end{equation}
Also, by (\ref{j1}),
\begin{align}\label{l53a}
\|f^{(r-1)}-\tau^{(r-1)}\|&=\|f^{(r-1)}-\tau^{(r-1)}\|_{[-2b,2b]}\le\|\tau^{(r-1)}\|_{[-2b,2b]}+\|f^{(r-1)}\|_{[-2b,2b]}\\
&\le cb^3+2^r\max_{0\le j<r}\frac1{b^{r-1-j}}\|\tau^{(j)}\|_{[-2b,2b]}\|G^{(r-1-j)}\|<cb^3.\nonumber
\end{align}
Define the collection $Y_s$ so that $y_1=-\pi$ and, for $\delta$ of (\ref{l152}), $y_i\in(-\delta,\delta)$, $2\le i\le 2s$.

Recalling that $r$ is odd, we conclude that
$$
F(x):=\int_0^x f(t)\,dt,
$$
is $2\pi$-periodic and is in $\Delta^{(1)}(Y_s)$.

By (\ref{l53}) and (\ref{l53a}),
$$
\|F-\mathcal T\|\le (2b)^{r+3}\quad\text{and}\quad \|F^{(r)}-\mathcal T^{(r)}\|\le cb^3,
$$
where
$$
{\mathcal T}(x):=\int_0^x\tau(t)\,dt.
$$
Hence,
\begin{align}\label{tr}
\omega_4(F^{(r)},t)&\le\omega_4({\mathcal T}^{(r)},t)+\omega_4( F^{(r)}-{\mathcal T}^{(r)},t)\\
&\le t^4\|\mathcal T^{(r+4)}\|+2^4\|F^{(r)}-\mathcal T^{(r)}\|\notag\\
&\le c_7t^4+c_8b^3.\nonumber
\end{align}
Let $T_n\in\Delta^{(1)}(Y_s)$. Then $T_n'(\delta)\ge0$ and $T_n'(y_i)=0$, $2\le i\le2s$.

Thus, there is a $\theta\in(-\delta,\delta)$, such that
$$
T^{(r+1)}_n(\theta)=r![y_2,\dots,y_{2s},\delta;T_n']\ge0.
$$
Therefore, by (\ref{l152}), for $n>r+1$, we have
\begin{align*}
\frac{b^2}4&\le|\tau ^{(r)}(\theta)|\le|\tau ^{(r)}(\theta)- T_n^{(r+1)}(\theta)|=|\mathcal T^{(r+1)}(\theta)-T_n^{(r+1)}(\theta)|\\
&\le n^{r+1}\|\mathcal T-T_n\|\le n^{r+1}\|\mathcal T-F\|+n^{r+1}\|F-T_n\|\\
&\le n^{r+1}(2b)^{r+3}+n^{r+1}\|F-T_n\|\le\frac{b^2}8+n^{r+1}\|F-T_n\|,
\end{align*}
provided $b<\frac1{2^4n}$, and we actually take $b=n^{-4/3}$ for $n>c_*:=\max\{r+1,2^{12}\}$. Then for $n>c_*$, we obtain
by (\ref{tr}),
\[
\|F-T_n\|\ge\frac{\root3\of n}{8n^{r+4}}=\frac{\root3\of n}{8(c_7+c_8)n^r}\left(\frac{c_7}{n^4}+c_8
b^3\right)\ge\frac{c\root3\of n}{n^r}\omega_4(F^{(r)},1/n).
\]
This completes the proof.
\end{proof}
\begin{proof}[Proof of Theorem $\ref{L53}$]
Let $r=2s-2$, and set
$$
\tau(x)=\sin^{r+1}x.
$$
Note that $\tau^{(r+1)}(0)=(r+1)!$, therefore, there is a constant $0<c_9<1$, such that  $\tau^{(r+1)}(x)\ge\frac12$ for $|x|\le c_9$. Observing that $\tau^{(r)}(0)=0$, we conclude that
\begin{equation}\label{l55}
|\tau^{(r)}(x)|\ge\frac{|x|}2,\quad|x|\le c_9.
\end{equation}
For $0<b<c_9$, denote by $f$ the $2\pi$-periodic function, such that
$$
f(x):=G\left(\frac xb\right)\tau(x),\quad x\in[-\pi,\pi],
$$
and we obtain, just as in (\ref{l53}) and (\ref{l53a}),
\begin{equation}\label{l56}
\|f-\tau\|\le(2b)^{r+1},
\end{equation}
and if $r\ge1$, then
\begin{equation}\label{l66}
\|f^{(r-1)}-\tau^{(r-1)}\|\le cb^2.
\end{equation}

Define the collection $Y_s$ so that $y_1=-\pi$ and $y_i\in[b/2,b]$, $2\le i\le 2s$.

Recalling that $r$ is even, we conclude that
$$
F(x):=\int_0^x f(t)\,dt,
$$
is $2\pi$-periodic and is in $\Delta^{(1)}(Y_s)$, and by (\ref{l56}),
$$
\|F-{\mathcal T}\|\le (2b)^{r+2},
$$
where
$$
{\mathcal T}(x):=\int_0^x \tau(t)\,dt.
$$
Thus, by (\ref{l66}),
\begin{align}\label{l57}
\omega_3(F^{(r)},t)&\le\omega_3(\mathcal T^{(r)},t)+\omega_3( F^{(r)}-\mathcal T^{(r)},t)\\
&\le t^3\|\mathcal T^{(r+3)}\|+2^3\|F^{(r)}-\mathcal T^{(r)}\|\notag\\
&\le c_{10}t^3+c_{11}b^2.\nonumber
\end{align}
Since  $T_n'(y_i)=0$, $i=2,\dots, 2s$, for each polynomial $T_n\in\Delta^{(1)}(Y_s)$,
there is a point $\theta\in[b/2,b]$, such that $T^{(r+1)}_n(\theta)=0$.

Hence, by (\ref{l55}), for $n>r+1$, we have
\begin{align*}
\frac b4&\le|\tau ^{(r)}(\theta)|=|\tau ^{(r)}(\theta)- T_n^{(r+1)}(\theta)|=|\mathcal T^{(r+1)}(\theta)-T_n^{(r+1)}(\theta)|\\
&\le n^{r+1}\|\mathcal T-T_n\|\le n^{r+1}\|\mathcal T-F\|+n^{r+1}\|F-T_n\|\\
&\le n^{r+1}(2b)^{r+2}+n^{r+1}\|F-T_n\|\le\frac b8+n^{r+1}\|F-T_n\|,
\end{align*}
provided $b<1/(2^4n)$, and we actually take $b=n^{-3/2}$ for $n>c_*:=\max\{(2/c_9)^{2/3},r+1,2^8\}$. Then for $n>c_*$, we
obtain by (\ref{l57}),
\[
\|F-T_n\|\ge\frac b{8n^{r+1}}=\frac{\sqrt n}{8n^{r+3}}=\frac{\sqrt n}{8(c_{10}+2^3c_{11})n^r}\left(\frac{c_{10}}{n^3}+2^3c_{11}b^2\right)\ge\frac{c\sqrt n}{n^r}\omega_3(F^{(r)},1/n).
\]
This completes the proof.
\end{proof}

\end{document}